\documentclass[11pt]{article}
\usepackage{amssymb,float,amsthm,hyperref,xcolor,amsmath,tipa}
\hypersetup{
    colorlinks,
    citecolor=violet,
    filecolor=blue,
    linkcolor=blue,
    urlcolor=blue
}
\usepackage{thmtools}

\newtheorem{theorem}{Theorem}[section]
\newtheorem{lemma}[theorem]{Lemma}
\newtheorem{corollary}[theorem]{Corollary}
\newtheorem{proposition}[theorem]{Proposition}

\theoremstyle{definition}

\newtheorem{definition}[theorem]{Definition}
\newtheorem{problem}[theorem]{Question}

\newcommand{\cl}{\mathcal{L}}
\newcommand{\crr}{\mathcal{R}}

\usepackage[margin=1in]{geometry}
\counterwithin{figure}{section}

\title{Subsequence sums in permutations}
\author{ Collier Gaiser\thanks{Department of Mathematics, Community College of Aurora, Aurora, CO 80011,  United States of America. Email: {\tt colliergaiser@gmail.com}}
\and  Paul Horn\thanks{Department of Mathematics, University of Denver, Denver, CO 80208,  United States of America and Department of Mathematics and Applied Mathematics, University of Johannesburg, Johannesburg, South Africa. Email: {\tt paul.horn@du.edu}} 
}
\date{}
\begin{document}
\maketitle

\maketitle
\begin{abstract}
A sequence of positive integers $(a_1,a_2,\ldots,a_k)$ is called $\ell$-additive if $a_1+a_2+\cdots+a_k=\ell a_1$ or $\ell a_k$. In this paper, we prove that for all $k\geq3$, if $n$ is sufficiently large, then every permutation of $\{1,2,\ldots,n\}$ has a 2-additive subsequence of length $k$. We also provide polynomial bounds for the smallest $n$ such that every permutation of $\{1,2,\ldots,n\}$ has a 2-additive subsequence of length $k$. When only monotone subsequences are considered, we show that $18$ is the smallest $n$ such that every permutation of $\{1,2,\ldots,n\}$ has a monotone 2-additive subsequence of length three. Strong bounds are obtained for the minimum number of $\ell$-additive subsequences of any length, as well as monotone $2$-additive subsequences of length three. Using techniques in arithmetic Ramsey theory, we also show similar results for products and inverse sums. 
\end{abstract}

{\small \textbf{Keywords:} subsequences, monotone subsequences, permutations, sums, products, inverse sums} \\
\indent {\small \textbf{AMS 2020 subject classification:} 05A05; 05D10}

\maketitle

\section{Introduction}
In arithmetic Ramsey theory, the finitary version of the celebrated van der Waerden's theorem says that, for any positive integers $k$ and $r$, if $n$ is large enough, then every $r$-coloring of $\{1,2,\ldots,n\}$ has a monochromatic arithmetic progression of length $k$ (see \cite[Chapter 3]{GrahamButler2015} or \cite[Chapter 2]{LandmanRobertson2014}). In 1973, Entringer and Jackson \cite{EntringerJackson1973} asked whether similar results hold for permutations. For any positive integer $n$, let $S_n$ denote the set of permutations of $\{1,2,\ldots,n\}$ written in one-line notation. Entringer and Jackson asked whether every $p=(p_1,p_2,\ldots,p_n)\in S_n$ contains a monotone subsequence $(x,y,z)$ which is also a three-term arithmetic progression (3-AP), i.e., $x+z=2y$. Tom Odda (Ron Graham), R. C. Lyndon, and H. E. Thomas, Jr. \cite{EOLT1975} independently provided a negative answer to Entringer and Jackson's question. The main idea of Odda's proof is that one can build larger permutations without monotone 3-APs using smaller permutations without monotone 3-APs: if $(p_1,p_2,\ldots,p_n)\in S_n$ has no monotone 3-APs, then the permutation \[(2p_1,2p_2,\ldots,2p_n,2p_1-1,2p_2-1,\ldots,2p_n-1)\in S_{2n}\] also has no monotone 3-APs.

Although arithmetic progressions can be avoided by permutations, are there subsequences, with properties resembling results in arithmetic Ramsey theory, guaranteed to exist in every permutation of $\{1,2,\ldots,n\}$ for large enough $n$? Schur's theorem \cite[Chapter 8]{LandmanRobertson2014} and the more general Rado's theorem \cite[Chapter 9]{LandmanRobertson2014} say that, if $n$ is large enough, then monochromatic solutions to certain linear equations can always be found when $\{1,2,\ldots,n\}$ is colored with a fixed number of colors. In this paper, we prove some Schur-Rado-type results for permutations. Our first result is that, for a fixed $k$, if $n$ is large enough then every $p\in S_n$ contains a subsequence of length $k$ whose sum is either twice the first term or twice the last term.

\begin{theorem}\label{Theorem:Main}
    For any positive integer $k\geq3$ and large enough $n$, every $p\in S_n$ has a subsequence $(x_1,x_2,\ldots,x_k)$ such that $x_1+x_2+\cdots+x_k=2 x_1$ or $2x_k$.
\end{theorem}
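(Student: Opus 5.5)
The plan is to reduce Theorem~\ref{Theorem:Main} to a statement about one ``large'' entry and the entries smaller than it on one side of it. First observe that $x_1+\cdots+x_k=2x_1$ is equivalent to $x_1=x_2+\cdots+x_k$, and $x_1+\cdots+x_k=2x_k$ is equivalent to $x_k=x_1+\cdots+x_{k-1}$. So it suffices to find a value $t$ together with $k-1$ distinct smaller values $a_1,\dots,a_{k-1}$ summing to $t$ that all lie on the same side of $t$ in $p$. If they all lie to the right of $t$, then $t$ is $x_1$; if they all lie to the left, then $t$ is $x_k$. Their internal order is irrelevant. For each value $t$, write $L_t$ and $R_t$ for the sets of values smaller than $t$ that occur to the left and to the right of $t$. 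The goal becomes: for $n$ large, some $t$ has a set of $k-1$ distinct elements summing to $t$ inside $L_t$ or inside $R_t$.

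The first step is a counting argument. Color each pair $a<b$ by whether $a$ lies left or right of $b$. Then I choose $t$ so that one of $L_t,R_t$ is a dense subset of $[t-1]$. Using $t=n$ already gives $\max(|L_n|,|R_n|)\ge (n-1)/2$, but density alone is not enough: the set $(t/2,t)$ has density $1/2$ and contains no two elements summing to $t$, and parity obstructions such as odd numbers also occur. So the second step is an additive lemma. If $A\subseteq[t-1]$ contains many small elements, say $A\cap[1,\varepsilon t]$ has size at least $\delta\varepsilon t$, and $A$ also meets a suitable top window, then $A$ contains $k-1$ distinct elements summing to $t$. The idea is to choose $k-2$ small elements of $A$ with a prescribed sum $s$ and then require $t-s\in A$ (distinct from the others), using the freedom in $s$ over many choices. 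The natural way to get many achievable values of $s$ is an iterated-sumset or Cauchy--Davenport/Freiman-type statement for dense subsets of $[1,\varepsilon t]$. This gives that the $(k-2)$-fold distinct sums of $A\cap[1,\varepsilon t]$ cover an interval or a long progression. That progression must then be intersected with $t-A$.

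The third step handles the case where the dense side consists only of large values, or where the small elements are split badly. Here I would switch the target. If for $t=n$ the small values are evenly split between $L_n$ and $R_n$, then I pass to a smaller target $t'$ lying among the small values. I iterate this, in the style of Erdős--Szekeres, to find a scale $t$ at which one side of $t$ contains a positive proportion of $[1,\varepsilon t]$ and the large-element requirement also holds. The polynomial bounds claimed in the abstract suggest that only boundedly many iterations are needed, each losing a polynomial factor. This is consistent with choosing $n$ as a polynomial in $k$.

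The main obstacle I expect is the additive lemma in the second step. Parity and congruence obstructions, as in the ``odd numbers'' example, must be handled. The first thing I would try is to exploit the fact that a large proportion of the small values $\{1,2,\dots,m\}$ lies on one side of some $t$. Consecutive small integers kill congruence obstructions: with $k-2$ small elements from a dense subset of $[1,m]$, the achievable sums form essentially all integers in a long interval. Then $t-s$ ranges over an interval, and it suffices that the chosen side of $t$ meets that interval. Arranging simultaneously that the same side of $t$ contains both the dense small part and an element of the required interval is exactly what the choice of $t$ in the third step must deliver. This is where I anticipate the real work lies.
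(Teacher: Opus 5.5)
\textbf{There is a genuine gap.} Your reduction is correct: it suffices to find a value $t$ and $k-1$ distinct smaller values summing to $t$, all on the same side of $t$. Everything after that, however, is a programme rather than a proof. The step you postpone (``arranging simultaneously that the same side of $t$ contains both the dense small part and an element of the required interval'') is the entire content of the theorem.

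Steps 1 and 2 only look at one target $t$ and the partition $[t-1]=L_t\sqcup R_t$, and no argument of that kind can work. For a single $t$, every partition is realised by some permutation. Take $L_t$ to be the values $\le t/(k-1)$ and $R_t$ the values $>t/(k-1)$. Then any $k-1$ distinct elements of $L_t$ sum to less than $t$, and any $k-1$ elements of $R_t$ sum to more than $t$. Here $L_t$ contains all of $[1,\varepsilon t]$ for $\varepsilon\le 1/(k-1)$, so your ``dense small part'' hypothesis holds and it is exactly the ``top window'' condition that fails.

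Applying this partition independently to every $t$ shows the statement is false for an arbitrary $2$-colouring of pairs $a<b$. A proof must therefore use that the colouring comes from a permutation, i.e.\ that ``$x$ lies left of $y$'' is transitive across different targets. Your Step 3 is the only place this could enter, and it is stated only as a hope (an Erd\H{o}s--Szekeres-style iteration with boundedly many rounds), with no mechanism.

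The additive lemma of Step 2 is also unproved and, as formulated, too weak. A side containing half of $[1,\varepsilon t]$ may be the even numbers, so its $(k-2)$-fold sums need not fill an interval. For $k=3$ the ``$(k-2)$-fold sumset'' is just the set itself. Nothing guarantees consecutive small integers on one side of $t$.

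\textbf{The missing idea.} The paper supplies the cross-target mechanism without any density or Freiman-type input.
\begin{itemize}
\item Among the values $1,\dots,2k-1$, let $p_k$ be the one with $k-1$ of them on each side. Let $\alpha$ be the sum of the $k-1$ on its left, and set $U=\{(k-2)\alpha\}\cup\{\alpha-p_i:1\le i\le k-1\}$, with a symmetric set $V$ on the right.
\item If no element of $U$ lies left of $p_k$ (or no element of $V$ lies right of it), a direct argument produces the configuration.
\item Otherwise take such an $a\in U$. Each multiple $(h+1)a$ equals $ha$ plus a sum of $k-2$ of the small values left of $p_k$ (for $a=(k-2)\alpha$, through a short chain of intermediate sums).
\item If $ha$ lay left of $p_k$ while $(h+1)a$ lay right of it, transitivity would put all these summands left of $(h+1)a$, which is forbidden. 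So every multiple of $a$ up to $N_k$ lies left of $p_k$.
\item Symmetrically, every multiple of some $b\in V$ lies right of $p_k$, and $\text{lcm}(a,b)\le N_k$ gives the contradiction, with $n=O(k^5)$.
\end{itemize}
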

In Theorem~\ref{Theorem:Main}, a permutation does not necessarily have both a subsequence $(x_1,x_2,\ldots,x_k)$ such that $x_1+x_2+\cdots+x_k=2 x_1$ and a subsequence $(y_1,y_2,\ldots,y_k)$ such that $y_1+y_2+\cdots+y_k=2 y_k$ simultaneously. For example, for any $n$, the identity permutation $(1,2,\ldots,n)\in S_n$ does not have a subsequence $(x_1,x_2,\ldots,x_k)$ such that $x_1+x_2+\cdots+x_k=2 x_1$. 

Our proof of Theorem~\ref{Theorem:Main} is purely combinatorial. The key observation is that any permutation of a subset of $\{1,2,\ldots,n\}$, which contains the integers $1,2,\ldots,2k-1$ and a selective collection of sums derived from these elements, is forced to contain a desirable subsequence. We also show that the smallest $n$ that satisfies Theorem~\ref{Theorem:Main} is between $(k-1)(3k-4)/2$ and $9(k-2)(k^2-k)^2/4$. 

We call a sequence $(a_1,a_2,\ldots,a_k)$ of positive integers \textbf{$2$-additive} if $a_1+a_2+\cdots+a_k=2a_1$ or $2a_k$. Theorem~\ref{Theorem:Main} then says that, for any positive integer $k\geq3$, if $n$ is large enough, then every $p\in S_n$ has a $2$-additive subsequence of length $k$. In general, we call a sequence $(a_1,a_2,\ldots,a_k)$  of positive integers $\ell$-\textbf{additive} if $a_1+a_2+\cdots+a_k=\ell a_1$ or $\ell a_k$. When $k=\ell$, for any $n$, the identity permutation $(1,2,\ldots,n)$ does not have $\ell$-additive subsequences of length $k$. When $k=3$ and $\ell=4$ and when $k=4$ and $\ell=3$, for any large enough $n$, every $p\in S_n$ has an $\ell$-additive subsequence of length $k$. See Section~\ref{Section:l-add} for details about these cases and the exact thresholds. The general case is still open.

\begin{problem}\label{Problem:l-Additive}
    Let $k,\ell\geq4$ be positive integers with $k\neq\ell$. Does there exist $n\in\mathbb{N}$ such that every $p\in S_n$ has an $\ell$-additive subsequence of length $k$?
\end{problem}
When every $p\in S_n$ has an $\ell$-additive subsequence of length $k$, using the prime number theorem, we prove that the minimum number of $\ell$-additive subsequences of length $k$ contained in any $p\in S_n$ is $\Omega(n/\log n)$ (treating $k$ and $\ell$ as constants). Using purely combinatorial arguments, we also show that the minimum number of $\ell$-additive subsequences of length $k$ contained in any $p\in S_n$ is $O(n^{k-1})$ (treating $k$ and $\ell$ as constants). In the previous two sentences, standard asymptotic notation was used: for functions $f(n)$ and $g(n)$, $f(n)=\Omega(g(n))$ if there exist constants $N$ and $c$ such that $|f(n)|\geq c|g(n)|$ for all $n\geq N$; and $f(n)=O(g(n))$ if there exist constants $N'$ and $C$ such that $|f(n)|\leq C|g(n)|$ for all $n\geq N'$.

The difference between additive subsequences and monotone 3-APs in permutations is not simply due to the fact that monotone 3-APs are required to be monotone. We show that Theorem~\ref{Theorem:Main} still holds for $k=3$ when the $2$-additive subsequence is required to be monotone.
\begin{theorem}\label{Theorem:Mon}
    If $n\geq 18$, then every $p\in S_n$ has a monotone $2$-additive subsequence of length three.
\end{theorem}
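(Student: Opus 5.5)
The plan is to reduce to a single finite case and then run a structured case analysis driven by the smallest elements. First I unpack the definition for $k=3$. An increasing subsequence $(x,y,z)$ with $x<y<z$ can only satisfy $x+y+z=2z$, i.e.\ $x+y=z$. A decreasing one $x>y>z$ can only satisfy $x+y+z=2x$, i.e.\ $y+z=x$. So a monotone $2$-additive triple is a Schur triple $a<b<c$ with $a+b=c$ whose positions satisfy either $\mathrm{pos}(a)<\mathrm{pos}(b)<\mathrm{pos}(c)$ or $\mathrm{pos}(c)<\mathrm{pos}(b)<\mathrm{pos}(a)$. (Entries of a subsequence are distinct, so $a=b$ is excluded.)

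Two reductions follow from this. First, for $n\ge 18$ the pattern formed by the values $1,\dots,18$ inside $p$ is a permutation of $\{1,\dots,18\}$, and every triple in it keeps its relative positions. Hence it suffices to treat $n=18$. Second, the condition is invariant under reversing $p$, since reversal swaps the two allowed position orders. This lets me assume, for instance, that $2$ lies to the right of $1$.

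Next I encode the constraints coming from the element $1$. For every $k\ge 2$, the triple $(1,k,k+1)$ forbids the orders $1,k,k+1$ and $k+1,k,1$. Let $L$ and $R$ be the sets of values to the left and to the right of $1$. Then:
\begin{itemize}
\item if $k,k+1\in R$, then $k+1$ precedes $k$;
\item if $k,k+1\in L$, then $k$ precedes $k+1$;
\item if $k$ and $k+1$ are split by $1$, the triple is automatically harmless.
\end{itemize}
So each maximal run of consecutive integers lying in $R$ appears in decreasing order, and each run lying in $L$ appears in increasing order.

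I then add the analogous constraints from $2$, using the triples $(2,k,k+2)$ for $k\ge3$, and from $3$, using $(3,k,k+3)$, together with a few specific triples such as $(1,2,3)$, $(2,3,5)$ and $(3,5,8)$. The case analysis branches on the membership pattern in $L$ or $R$ of $2,3,4,\dots$, using the reversal symmetry to halve the cases. In each branch the run structure forces long monotone blocks. Inside a long decreasing run in $R$, I look for $c>b>a$ with $c=a+b$ where $a$ lies to the right of $b$. Elements of the same run are ordered, so a triple like $(a,b,a+b)$ with all three in one decreasing run of $R$ immediately gives a decreasing witness. The same holds for increasing runs in $L$. Hence every run has to be short, roughly with its maximum less than twice its minimum. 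Thus $L$ and $R$ must alternate frequently among $1,\dots,18$. The frequent alternation then produces cross triples whose orders are pinned down by the constraints from $1$ and $2$, giving a contradiction.

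The main obstacle I expect is controlling the number of branches so that the argument stays human-checkable. The bound $18$ is presumably sharp, witnessed by some permutation of $S_{17}$, so the contradiction should only appear near the top values, around $17$ and $18$. I would first try to show that the runs have length at most $2$ or $3$, which nearly determines the $L/R$ word of $2,\dots,18$, and then exhaust the few remaining words. If the branching still explodes, the fallback is a computer search over the constraint system. It is a straightforward backtracking over positions, with the Schur-triple orderings as constraints, and it terminates quickly for $n=18$. It can be presented as a verified finite check alongside the structural lemma above.
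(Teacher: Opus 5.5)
Your preliminary reductions are fine. Passing to the pattern on $\{1,\dots,18\}$ handles all $n\ge 18$, and reversal is a genuine symmetry. Your run lemma is also correct: the triple $(1,k,k+1)$ forces consecutive integers to the right of $1$ to appear in decreasing order and those to the left in increasing order, so a run $\{m,\dots,M\}$ on one side of $1$ must satisfy $M\le 2m$.

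The gap is everything after that. The bound $M\le 2m$ only forces about three runs among $2,\dots,18$; for example, the runs $\{2,3,4\}$, $\{5,\dots,10\}$, $\{11,\dots,18\}$ are not excluded by it. So neither runs of length at most $2$ or $3$ nor \emph{frequent} alternation of $L$ and $R$ follows from what you have. The decisive step, that cross triples then give a contradiction, is asserted with no mechanism. A permutation of $\{1,\dots,17\}$ with no monotone $2$-additive triple exists, so any valid argument must use the value $18$ somewhere, and your outline never indicates where. The computer-search fallback would settle the finite statement if actually carried out, but as written it is a promise rather than a step. The proposal therefore does not yet prove the theorem.

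What is missing is a sharp forcing rule and a better choice of what to branch on. For $a<b$, the values $a,b,a+b$ form a monotone $2$-additive subsequence exactly when $b$ sits positionally between $a$ and $a+b$. So in a counterexample, $a$ and $a+b$ always lie on the same side of $b$.

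The paper branches on the relative order of the smallest values rather than on sides of $1$:
\begin{itemize}
\item With $1$ before $2$, the relation $1+2=3$ puts $3$ before $2$, giving $(1,3,2)$ or $(3,1,2)$.
\item In the latter case, $1+3=4$ puts $4$ to the right of $3$, leaving $(3,4,1,2)$, $(3,1,4,2)$, $(3,1,2,4)$.
\end{itemize}
In each of these four configurations the rule is applied repeatedly. Often it is applied to two representations of the same value (e.g.\ $6=2+4=1+5$), so that a new element gets pinned between two already placed ones. The chase closes within $\{1,\dots,12\}$, $\{1,\dots,16\}$, $\{1,\dots,18\}$ and $\{1,\dots,14\}$ respectively. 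Contrary to your expectation that the contradiction appears only near $17$ and $18$, only the $(3,1,4,2)$ case needs $18$, via $6+12=18$ and then $(8,10,18)$.

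Encoding only which side of $1$ each value lies on discards exactly the relative order among small values that drives this chase. That is why your branching threatens to explode.
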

In Theorem~\ref{Theorem:Mon}, $18$ is the smallest $n$ possible because there exists a permutation of $\{1,2,\ldots,17\}$ which does not have monotone $2$-additive subsequences of length three. Whether similar results hold for monotone $2$-additive subsequences of length greater than three is an open question. 

\begin{problem}\label{Problem:Mon}
Is it true that, for any positive integer $k\geq4$, there exists $n$ such that every $p\in S_n$ contains a monotone $2$-additive subsequence of length $k$?
\end{problem}

Since the Erd\H{o}s–Szekeres theorem \cite{ErdosSzekeres1935} implies that every permutation of $\{1,2,\ldots,(k-1)^2+1\}$ has a monotone subsequence of length $k$, a positive answer to Question~\ref{Problem:Mon} would be a generalization of the Erd\H{o}s–Szekeres theorem.

Theorems~\ref{Theorem:Main} and \ref{Theorem:Mon} can be used to obtain similar results for subsequence products and inverse sums of subsequences. Using well-known arguments in arithmetic Ramsey theory, we show that for any $k$, if $n$ is large enough, then every $p\in S_n$ has a subsequence of length $k$ whose product is either the square of the first term or the square of the last term; and for any $k$, if $n$ is large enough, then every $p\in S_n$ has a subsequence $(x_1,x_2,\ldots,x_k)$ such that $1/x_1+1/x_2+\ldots+1/x_k=2/x_1$ or $2/x_k$. When the subsequences are required to be monotone, both results hold for the case $k=3$.

We note that arithmetic progressions behave differently in permutations of all positive integers. For any positive integer $k\geq3$, we call a monotone sequence $(x_1,x_2,\ldots,x_k)$ of length $k$ a monotone $k$-AP if there exists a positive integer $d$ such that $x_i=x_{i-1}+d$ for all $i\in\{2,3,\ldots,k\}$ or $x_{i}=x_{i+1}+d$ for all $i\in\{1,2,\ldots,k-1\}$. Davis, Entringer, Graham, and Simmons \cite{DEGS1977} showed that every permutation of the positive integers has a monotone $3$-AP, while there exist permutations of the positive integers without monotone 5-APs. However, despite some recent effort \cite{Adenwalla2024,Geneson2019,LeSaulnierVijay2011}, we still do not know whether there exists a permutation of the positive integers without monotone 4-APs. We also note that there are other studies on arithmetic progressions in permutations, such as arithmetic progressions in random permutations \cite{GZ2021}, arithmetic progressions in permutations of rational and real numbers \cite{ABJ2011}, and permutations such that both the terms and the indices avoid arithmetic progressions \cite{ES2017,Hegarty2004,HM2015,JS2015,SS2018}.

This paper is organized as follows. Some notation and preliminary results are presented in Section~\ref{Section:Pre}.  We prove Theorem~\ref{Theorem:Main} in Section~\ref{Section:2-Add} and provide polynomial bounds for the smallest $n$ such that every $p\in S_n$ has a $2$-additive subsequence of length $k$. In Section~\ref{Section:l-add}, we study general $\ell$-additive subsequences and provide results for two easiest cases not covered by Theorem~\ref{Theorem:Main} and, under the assumption that every $p\in S_n$ has an $\ell$-additive subsequence, bounds for the minimum number of $\ell$-additive subsequences. In Section~\ref{Section:Monotone2-Add}, using standard case analysis, we show that $18$ is the smallest $n$ such that every $p\in S_{n}$ has a monotone $2$-additive subsequence of length three. We also provide strong bounds for the minimum number of monotone $2$-additive subsequence of length three. In Section~\ref{Section:Prod}, we show similar results for subsequence products and inverse sums of subsequences. Finally, we discuss possible future directions in Section~\ref{Section:Conclu}.
\section{Preliminaries}\label{Section:Pre}

We first introduce some terminology which will be used in our proofs.

\begin{definition}
    For all $p=(p_1,p_2,\ldots,p_n)\in S_n$ and $s\in\{1,2,\ldots,n\}$, if $p_i=s$, then we define $\mathcal{L}_p(s)=\{p_j:j<i\}$ and $\mathcal{R}_p(s)=\{p_j:j>i\}$.
\end{definition}
That is, $\mathcal{L}_p(s)$ consists of all the terms to the left of $s$ and $\mathcal{R}_p(s)$ consists of all the terms to the right of $s$ when we arrange the terms of $p$ on a horizontal line. For example, for the permutation $p=(5,1,4,3,2)\in S_5$, we have $\mathcal{L}_p(4)=\{1,5\}$ and $\mathcal{R}_p(1)=\{2,3,4\}$.

\begin{definition}
    Let $n\in\mathbb{N}$, $A\subseteq\{1,2,\ldots,n\}$, and $p\in S_n$. The subpermutation of $p$ on $A$ is a sequence obtained by deleting all the terms of $p$ which are not in $A$, but keeping the relative order of the terms that are in $A$.
\end{definition}
For example, $(5,1,3)$ is a subpermutation of $(5,1,4,3,2)$ on $\{1,3,5\}$. If $q$ is a subpermutation of $p$ on $A$, then we will simply call $q$ a subpermutation on $A$ when there is no confusion.

Now we state two simple observations which will be used to simplify our proofs.
\begin{lemma}\label{Lemma:Fromltol-1}
    A sequence $(x_1,x_2,\ldots,x_k)$ is $\ell$-additive if and only if $\sum_{i=1}^{k-1}x_i=(\ell-1)x_k$ or $\sum_{i=2}^kx_i=(\ell-1)x_1$.
\end{lemma}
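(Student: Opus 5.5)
The statement follows directly from the definition of $\ell$-additivity by subtracting one term from both sides of the defining equation, so I plan a two-line algebraic argument treating the two alternatives in the definition separately. By definition, $(x_1,x_2,\ldots,x_k)$ is $\ell$-additive if and only if $\sum_{i=1}^{k}x_i=\ell x_k$ or $\sum_{i=1}^{k}x_i=\ell x_1$. I will show that the first of these equalities is equivalent to $\sum_{i=1}^{k-1}x_i=(\ell-1)x_k$, and that the second is equivalent to $\sum_{i=2}^{k}x_i=(\ell-1)x_1$. The disjunction of the two equivalences then gives the lemma.

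For the first equivalence, I split off the last summand, writing $\sum_{i=1}^{k}x_i=\sum_{i=1}^{k-1}x_i+x_k$. Then $\sum_{i=1}^{k}x_i=\ell x_k$ holds if and only if $\sum_{i=1}^{k-1}x_i+x_k=\ell x_k$. Subtracting $x_k$ from both sides, which is reversible in the integers, this holds if and only if $\sum_{i=1}^{k-1}x_i=(\ell-1)x_k$.

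For the second equivalence, I split off the first summand instead, writing $\sum_{i=1}^{k}x_i=x_1+\sum_{i=2}^{k}x_i$. Subtracting $x_1$ from both sides shows that $\sum_{i=1}^{k}x_i=\ell x_1$ holds if and only if $\sum_{i=2}^{k}x_i=(\ell-1)x_1$.

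I do not expect any real obstacle here. The only point requiring care is to pair each alternative correctly: the condition with the sum over $i=1,\ldots,k-1$ corresponds to $\ell x_k$, and the condition with the sum over $i=2,\ldots,k$ corresponds to $\ell x_1$. Positivity of the $x_i$ is not used at all; the equivalences hold for arbitrary integers.
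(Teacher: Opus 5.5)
Your argument is correct: splitting off $x_k$ (respectively $x_1$) and subtracting it from both sides is the one-line observation the paper relies on; the paper states this lemma as a simple observation and gives no written proof. You pair the alternatives correctly, matching $\sum_{i=1}^{k-1}x_i$ with $\ell x_k$ and $\sum_{i=2}^{k}x_i$ with $\ell x_1$.
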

\begin{lemma}\label{Lemma:FromAll-nToSome-n}
Let $m,k,\ell\in\mathbb{N}$. If every $p\in S_m$ has an $\ell$-additive subsequence of length $k$, then for all $n\geq m$, every $p\in S_n$ has an $\ell$-additive subsequence of length $k$. 
\end{lemma}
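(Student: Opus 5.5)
The plan is to reduce from $S_n$ to $S_m$ by restricting to the smallest values. Given $n\geq m$ and $p\in S_n$, let $q$ be the subpermutation of $p$ on $A=\{1,2,\ldots,m\}$. Because $q$ keeps the relative order of the terms of $p$ lying in $A$, it is a sequence whose entries are exactly $1,2,\ldots,m$, each appearing once. We can therefore identify $q$ with an element $q'\in S_m$ written in one-line notation. In this case no relabeling of values is needed, since the retained values are already $\{1,\ldots,m\}$.

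Next we use the hypothesis. By assumption, $q'$ has an $\ell$-additive subsequence $(x_1,\ldots,x_k)$ of length $k$. This means $x_1+\cdots+x_k=\ell x_1$ or $\ell x_k$. A subsequence of $q$ is a subsequence of $p$, because deleting terms preserves relative order and a subsequence of a subsequence is a subsequence. So $(x_1,\ldots,x_k)$ occurs in $p$ with the same values. Being $\ell$-additive depends only on the values and their order, so it is an $\ell$-additive subsequence of $p$.

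No step here is a serious obstacle. The one point to state carefully is that we restrict to the initial segment $\{1,\ldots,m\}$ rather than to an arbitrary $m$-subset of values. Restricting to an arbitrary subset would force an order-preserving relabeling, which does not preserve the linear relation defining additivity. Using the initial segment avoids relabeling altogether, so the equation is inherited unchanged.
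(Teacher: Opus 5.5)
Your proof is correct and matches the paper's intent. The paper records this lemma as a simple observation without proof, and restricting $p$ to its subpermutation on $\{1,2,\ldots,m\}$, which is literally an element of $S_m$ and so needs no relabeling, is exactly the argument the lemma calls for. As an aside, order-preserving relabelings by dilation $x\mapsto ax$ also preserve the homogeneous defining relation; this is how the paper later uses subpermutations on $\{a,2a,\ldots,ma\}$ in Theorem~\ref{Theorem:NumberOfAdditiveGeneral}.
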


Lemmas~\ref{Lemma:Fromltol-1} and \ref{Lemma:FromAll-nToSome-n} also hold for monotone $\ell$-additive subsequences.
\section{2-Additive Subsequences of Arbitrary Length}\label{Section:2-Add}
In this section, we first prove Theorem~\ref{Theorem:Main}. We do so by focusing on a subset of $\{1,2,\ldots,n\}$ which contains $1,2,\ldots,2k-1$ and a selective collection of sums derived from these elements. Because of this, our proof also provides an upper bound for the smallest $n$ such that every $p\in S_n$ has a 2-additive subsequence of length $k$. 

We start with some notation. Given $k\geq3$, for all $p\in S_{2k-1}$, let \[\alpha_p=p_1+p_2+\cdots+p_{k-1},\] \[\beta_p=p_{k+1}+p_{k+2}+\cdots+p_{2k-1},\]
    \[
    U_p=\{(k-2)\alpha_p,\alpha_p-p_1,\alpha_p-p_2,\ldots,\alpha_p-p_{k-1}\},
    \]
    \[
    V_p=\{(k-2)\beta_p,\beta_p-p_{k+1},\beta_p-p_{k+2},\ldots,\beta_p-p_{2k-1}\}.
    \]
    and \[L_p=\max\{\text{lcm}(a,b):a\in U_p,b\in V_p\},\] where $\text{lcm}(a,b)$ is the least common multiple of $a$ and $b$. Notice that, for all $p\in S_{2k-1}$, there are exactly $k$ elements in the set $U_p$ as well as in the set $V_p$. Let $N_k=\max_{p\in S_{2k-1}}L_p$. Our goal is to show that every $\sigma\in S_{N_k}$ has a $2$-additive susbequence of length $k$.

Now we prove an upper bound for $N_k$.

\begin{lemma}\label{Lemma:UppderBoundN}
For all $k\geq3$, we have $N_k\leq 9(k-2)(k^2-k)^2/4$.
\end{lemma}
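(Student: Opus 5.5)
We need to bound $\mathrm{lcm}(a,b)\le ab$ and maximize $a\in U_p$, $b\in V_p$. The plan is to bound each factor separately using the fact that $p$ is a permutation of $\{1,\dots,2k-1\}$, and then split according to which elements of $U_p$ and $V_p$ are in play.

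First, the simple bounds. For any $p\in S_{2k-1}$, $\alpha_p$ is a sum of $k-1$ distinct elements of $\{1,\dots,2k-1\}$. Hence
\[
\alpha_p\le (k+1)+\cdots+(2k-1)=\frac{(k-1)(3k)}{2}=\frac{3(k^2-k)}{2}.
\]
The same bound holds for $\beta_p$. Every element of $U_p$ is at most $\max\{(k-2)\alpha_p,\alpha_p-1\}\le (k-2)\alpha_p$, since $k\ge3$. So every $a\in U_p$ satisfies $a\le \frac{3(k-2)(k^2-k)}{2}$, and similarly for $b\in V_p$.

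Using $\mathrm{lcm}(a,b)\le ab$ naively therefore gives $(k-2)^2\cdot\frac94(k^2-k)^2$. This has one factor of $(k-2)$ too many, so the whole difficulty is to remove one factor $(k-2)$.

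Second, the case split, with $a\in U_p$ and $b\in V_p$. If at least one of $a,b$ is of the form $\alpha_p-p_i$ or $\beta_p-p_j$, then that factor is below $\frac32(k^2-k)$. The other factor is at most $\frac32(k-2)(k^2-k)$, so $ab\le \frac94(k-2)(k^2-k)^2$.

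The remaining case is $a=(k-2)\alpha_p$ and $b=(k-2)\beta_p$. Here
\[
\mathrm{lcm}(a,b)=(k-2)\,\mathrm{lcm}(\alpha_p,\beta_p)\le (k-2)\alpha_p\beta_p\le \frac94(k-2)(k^2-k)^2,
\]
using $\mathrm{lcm}(cx,cy)=c\,\mathrm{lcm}(x,y)$. (In fact $\alpha_p$ and $\beta_p$ use disjoint sets of elements, so the bound is even better, but this is not needed.) This last case is the only step needing an idea beyond $\mathrm{lcm}\le$ product: the common factor $k-2$ must be pulled out instead of being counted twice.

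Taking the maximum over all $p$ yields $N_k\le 9(k-2)(k^2-k)^2/4$.
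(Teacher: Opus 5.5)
Your proof is correct and takes essentially the paper's route. The paper asserts $\text{lcm}(a,b)\le (k-2)\alpha_p\beta_p$ directly ``by the definition of $U_p$ and $V_p$'' and then bounds $\alpha_p\beta_p$ by $\bigl(\sum_{i=k+1}^{2k-1} i\bigr)^2=\tfrac94(k^2-k)^2$. Your case split, in particular pulling the common factor $k-2$ out of $\text{lcm}((k-2)\alpha_p,(k-2)\beta_p)$, is exactly the justification that the paper's one-line argument leaves implicit.
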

\begin{proof}
Let $k\geq3$, $p\in S_{2k-1}$, $a\in U_p$, and $b\in V_p$. By the definition of $U_p$ and $V_p$, we have
\[
\text{lcm}(a,b)\leq(k-2)\alpha_p\beta_p<(k-2)\left(\sum_{i=k+1}^{2k-1}i\right)^2=\frac{9}{4}(k-2)(k^2-k)^2.
\]
Hence, we have $L_p\leq 9(k-2)(k^2-k)^2/4$ for all $p\in S_{2k-1}$. Therefore, $N_k\leq 9(k-2)(k^2-k)^2/4$.
\end{proof}
    
Next, we show a sufficient condition for a permutation of $\{1,2,\ldots,N_k\}$ to have a $2$-additive subsequence of length $k$.    
\begin{lemma}\label{Lemma:NonEmptyIntersection}
    Let $k\geq3$, $\sigma\in S_{N_k}$, and $p$ the subpermutation of $\sigma$ on $\{1,2,\ldots,2k-1\}$. If $U_p\cap\mathcal{L}_\sigma(p_k)=\emptyset$ or $V_p\cap\mathcal{R}_\sigma(p_k)=\emptyset$, then $\sigma$ has a $2$-additive subsequence of length $k$.
\end{lemma}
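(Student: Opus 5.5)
The plan is to use Lemma~\ref{Lemma:Fromltol-1} with $\ell=2$: a sequence $(x_1,\ldots,x_k)$ is $2$-additive exactly when $x_1+\cdots+x_{k-1}=x_k$ or $x_2+\cdots+x_k=x_1$. By the symmetry $\sigma\mapsto$ its reverse, which swaps $\mathcal{L}_\sigma(p_k)$ with $\mathcal{R}_\sigma(p_k)$, $U_p$ with $V_p$, and the two conditions, I only need to treat $U_p\cap\mathcal{L}_\sigma(p_k)=\emptyset$.

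\emph{Step 1 (elements exist and are distinct).} Every element of $U_p$ divides some $\mathrm{lcm}(a,b)\le L_p\le N_k$, so all $k$ elements of $U_p$ occur in $\sigma$. They are pairwise distinct: the $\alpha_p-p_j$ are distinct because the $p_j$ are, and $(k-2)\alpha_p\ge\alpha_p>\alpha_p-p_j$ since $k\ge 3$. The hypothesis says each element of $U_p$ lies to the right of $p_k$ in $\sigma$, or equals $p_k$. The key identity is
\[
\sum_{j=1}^{k-1}(\alpha_p-p_j)=(k-1)\alpha_p-\alpha_p=(k-2)\alpha_p ,
\]
so the $k-1$ numbers $\alpha_p-p_j$ sum to the remaining element of $U_p$. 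Also $\sum_{i\ne j}p_i=\alpha_p-p_j$, and these $p_i$ lie strictly left of $p_k$, hence left of every element of $U_p$.

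\emph{Step 2 (case split on the position of $(k-2)\alpha_p$ among $U_p$).} If $(k-2)\alpha_p$ is the rightmost element of $U_p$ in $\sigma$, list the $\alpha_p-p_j$ in their $\sigma$-order and append $(k-2)\alpha_p$. By the identity, the first $k-1$ terms sum to the last, which is $2$-additive. If $(k-2)\alpha_p$ is the leftmost element of $U_p$, put it first and follow it by the $\alpha_p-p_j$. Now the last $k-1$ terms sum to the first, again $2$-additive.

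\emph{Step 3 (the main obstacle).} The hard case is when $(k-2)\alpha_p$ lies strictly between two elements $\alpha_p-p_a$ and $\alpha_p-p_b$ of $U_p$. Here I would exploit the sums $\sum_{i\neq j}p_i=\alpha_p-p_j$, whose $k-2$ summands all sit left of $p_k$. To reach $k-1$ summands, I would try two things. First, split one summand, replacing some $p_i$ by two elements of $\{1,\ldots,2k-1\}$ that lie left of $p_k$ and sum to $p_i$. Second, add $p_k$ itself as a summand and compare $\alpha_p-p_j+p_k$ with the positions of the other elements of $U_p$. I expect this middle case to require the most care. It is the step where the precise choice of $U_p$, namely the inclusion of $(k-2)\alpha_p$ together with all $\alpha_p-p_j$, must be used in full. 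If neither device works directly, I would refine the case split by taking $j$ with $\alpha_p-p_j$ rightmost in $\sigma$ and arguing on the relative order of the remaining $\alpha_p-p_i$ and $(k-2)\alpha_p$.
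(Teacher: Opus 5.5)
\textbf{There is a genuine gap in Step 3.} Your reduction to the case $U_p\cap\mathcal{L}_\sigma(p_k)=\emptyset$ is correct, and so are the bookkeeping in Step 1 and both cases of Step 2. Your ``leftmost'' case is exactly the paper's Case 1. Step 3, however, is not a proof: it lists possible attacks without carrying any of them out.

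The first attack cannot work. In $\sigma$, the only elements of $\{1,\ldots,2k-1\}$ to the left of $p_k$ are $p_1,\ldots,p_{k-1}$. The sum $\alpha_p-p_j$ already uses all of them except $p_j$, so there are no two unused ones that could replace some $p_i$. The second attack, adding $p_k$ as a summand, produces a target $\alpha_p-p_j+p_k$ whose position you know nothing about. So the case where $(k-2)\alpha_p$ lies strictly between elements of $U_p$ remains open, and that is where the real content of the lemma lies.

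The missing idea is to use intermediate integers that are \emph{not} in $U_p$: the partial sums of your key identity.
\begin{itemize}
\item Suppose some $\alpha_p-p_i$, say $i=1$, lies to the left of $(k-2)\alpha_p$.
\item Set $A_1=\alpha_p-p_1$ and $A_i=A_{i-1}+(\alpha_p-p_i)$. Then $A_{k-1}=(k-2)\alpha_p$. Every $A_i\le (k-2)\alpha_p\le L_p\le N_k$, so each $A_i$ occurs in $\sigma$.
\item Let $j\ge 2$ be the least index such that $A_j$ is not to the left of $(k-2)\alpha_p$. It exists because $A_{k-1}=(k-2)\alpha_p$.
\item Then $A_j=A_{j-1}+\sum_{m\le k-1,\,m\ne j}p_m$. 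This is a sum of $k-1$ terms, all strictly to the left of $(k-2)\alpha_p$, hence to the left of $A_j$.
\end{itemize}
The increment $\alpha_p-p_j$ alone has only $k-2$ summands; the previous partial sum $A_{j-1}$ supplies the missing one. This gives a $2$-additive subsequence with sum $2A_j$.

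Distinctness of the summands holds as follows. For $j=2$, $A_1\in U_p$ equals no $p_m$ with $m\le k-1$, by the hypothesis. For $j\ge 3$, $A_{j-1}\ge\alpha_p>p_m$.

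This ``first crossing'' argument covers every configuration in which some $\alpha_p-p_i$ precedes $(k-2)\alpha_p$, including your rightmost case. Together with your leftmost case, it completes the proof.
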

\begin{proof}
    Let $k\geq3$, $\sigma\in S_{N_k}$, and $p$ the subpermutation of $\sigma$ on $\{1,2,\ldots,2k-1\}$. By symmetry, it suffices to prove that if $U_p\cap\mathcal{L}_\sigma(p_k)=\emptyset$, then $\sigma$ has a $2$-additive subsequence of length $k$. We assume that $U_p\cap\mathcal{L}_\sigma(p_k)=\emptyset$. Since $\{p_1,p_2,\ldots,p_{k-1}\}\subseteq\mathcal{L}_\sigma(p_k)$ and $(k-2)\alpha_p\in U_p$, we have $\{p_1,p_2,\ldots,p_{k-1}\}\subseteq\mathcal{L}_\sigma((k-2)\alpha_p)$. There are two cases depending on the locations of $\alpha_p-p_1,\alpha_p-p_2,\ldots,\alpha_p-p_{k-1}$ relative to $(k-2)\alpha_p$.

    {\bf Case 1}: $\alpha_p-p_i\in\mathcal{R}_\sigma((k-2)\alpha_p)$ for all $i\in\{1,2,\ldots,k-1\}$. Since $(\alpha_p-p_1)+(\alpha_p-p_2)+\cdots+(\alpha_p-p_{k-1})=(k-2)\alpha_p$, $\sigma$ has a $2$-additive subsequence of length $k$ whose sum is $2(k-2)\alpha_p$.

    {\bf Case 2}: There exists $i\in\{1,2,\ldots,k-1\}$ such that $\alpha_p-p_i\in\mathcal{L}_\sigma((k-2)\alpha_p)$. WLOG, we assume that $\alpha_p-p_1\in\mathcal{L}_\sigma((k-2)\alpha_p)$. Write $A_1=\alpha_p-p_1$ and, for all $i\in\{2,3,\ldots,k-1\}$, $A_i=A_{i-1}+\alpha_p-p_i$. Let $j\in\{2,3,\ldots,k-1\}$ be the smallest index such that $A_j\notin\mathcal{L}_\sigma((k-2)\alpha_p)$. Notice that since $A_{k-1}=(\alpha_p-p_1)+(\alpha_p-p_2)+\cdots+(\alpha_p-p_{k-1})=(k-2)\alpha_p\notin\mathcal{L}_p((k-2)\alpha_p)$, such a $j$ exists. Now we have $\{p_1,p_2,\ldots,p_{k-1},A_{j-1}\}\subseteq\mathcal{L}_\sigma((k-2)\alpha_p)$, but $A_j\notin\mathcal{L}_\sigma((k-2)\alpha_p)$. Since $A_j=A_{j-1}+p_1+p_2+\cdots+p_{j-1}+p_{j+1}+p_{j+2}+\cdots+p_{k-1}$, $\sigma$ has a $2$-additive subsequence of length $k$ whose sum is $2A_j$.
\end{proof}

Now we are ready to prove Theorem~\ref{Theorem:Main}. Here we provide a more detailed statement which contains an upper bound for $n$ in Theorem~\ref{Theorem:Main}.
\begin{theorem}\label{Theorem:Existence:Nonmonotone}
For all $k\geq3$ and $n\geq9(k-2)(k^2-k)^2/4$, every $p\in S_n$ has a $2$-additive subsequence of length $k$.
\end{theorem}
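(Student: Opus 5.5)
By Lemma~\ref{Lemma:UppderBoundN} and Lemma~\ref{Lemma:FromAll-nToSome-n}, it suffices to show that every $\sigma\in S_{N_k}$ has a $2$-additive subsequence of length $k$. Here we use that $N_k\le 9(k-2)(k^2-k)^2/4\le n$, and that $N_k\ge 2k-1$, which holds since $(k-2)\alpha_p\ge\alpha_p\ge 1+\cdots+(k-1)$ and similarly for $\beta_p$. Fix $\sigma\in S_{N_k}$ and let $p$ be the subpermutation of $\sigma$ on $\{1,\dots,2k-1\}$. Every element of $U_p\cup V_p$ is at most $L_p\le N_k$, so all these numbers actually occur in $\sigma$. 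If $U_p\cap\mathcal{L}_\sigma(p_k)=\emptyset$ or $V_p\cap\mathcal{R}_\sigma(p_k)=\emptyset$, Lemma~\ref{Lemma:NonEmptyIntersection} finishes the proof. So the plan is to handle the remaining case: there exist $a\in U_p$ lying to the left of $p_k$ and $b\in V_p$ lying to the right of $p_k$.

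In that case I would exploit the least common multiple. Let $m=\mathrm{lcm}(a,b)\le L_p\le N_k$, so $m$ appears in $\sigma$. Either $m$ lies to the right of $p_k$, and hence to the right of $a$, or to the left of $p_k$, and hence to the left of $b$. By symmetry, take the first case: $a$ precedes $m$.

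The goal is then to build a $k$-term subsequence ending at $m$ whose first $k-1$ terms sum to $m$, which is $2$-additive by Lemma~\ref{Lemma:Fromltol-1}. The natural device is to use multiples of $a$, namely $a,2a,\dots,(m/a)a=m$, together with elements of $\{1,\dots,2k-1\}$. I would mimic Case~2 of Lemma~\ref{Lemma:NonEmptyIntersection}. We know $a$ is to the left of $m$, and the elements $a$ is built from are also relevant. If $a=\alpha_p-p_i$, the other $k-2$ of $p_1,\dots,p_{k-1}$ sum to $a$ and lie left of $p_k$. If $a=(k-2)\alpha_p$, it is a sum of the $k-1$ values $\alpha_p-p_i$.

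I expect this last step to be the main obstacle: identifying the right chain of intermediate values between $a$ and $m$ so that a first-crossing argument works. My first attempt would be to walk along $A_t=A_{t-1}+a$ and find the first multiple $ta$ that sits right of the relevant block. That yields the subsequence $(x_1,\dots,x_{k-2},(t-1)a,ta)$ with $x_1+\dots+x_{k-2}=a$, provided the $x_i$ (elements of $\{1,\dots,2k-1\}$ summing to $a$) all lie left of $(t-1)a$. If that ordering is not guaranteed, I would instead combine $a$ with the decomposition of $b$ via $m/b$, or run the Case~1/Case~2 split of Lemma~\ref{Lemma:NonEmptyIntersection} relative to $m$ instead of $(k-2)\alpha_p$. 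The definition of $L_p$ as a maximum over all pairs $(a,b)$ suggests the authors intend precisely such an lcm-based crossing argument, and the symmetric case where $m$ lies left of $p_k$ is handled identically using $b$ and $V_p$.
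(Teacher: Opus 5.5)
There is a genuine gap. Your setup matches the paper: you reduce to $\sigma\in S_{N_k}$, dispose of the empty-intersection case by Lemma~\ref{Lemma:NonEmptyIntersection}, take $a\in U_p\cap\mathcal{L}_\sigma(p_k)$ and $b\in V_p\cap\mathcal{R}_\sigma(p_k)$, and use $m=\mathrm{lcm}(a,b)\le N_k$. But the step you flag as the main obstacle is the whole content of the proof, and you leave it open. The variants you sketch also would not go through as written.

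\textbf{Measure the crossing against $p_k$.} The missing idea is to measure the crossing against $p_k$, not against $a$ or $m$. In your first case the chain $a,2a,\dots,m$ starts in $\mathcal{L}_\sigma(p_k)$ and ends outside it. So there is a least $t\ge2$ with $(t-1)a\in\mathcal{L}_\sigma(p_k)$ and $ta\notin\mathcal{L}_\sigma(p_k)$. By definition $p_1,\dots,p_{k-1}$ lie left of $p_k$, so they and $(t-1)a$ all precede $ta$. For $a=\alpha_p-p_i$, the $k-1$ values $(t-1)a$ and $p_j$ ($j\ne i$, $j\le k-1$) sum to $ta$. They are distinct once $k\ge4$, since then $a>p_j$. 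This gives the $2$-additive subsequence. Your ordering worry therefore disappears. What is needed is not that the $x_i$ lie left of $(t-1)a$, but only that all $k-1$ summands precede $ta$; their order among themselves is irrelevant. The paper states this contrapositively: if there is no $2$-additive subsequence, every multiple of $a$ up to $N_k$ is trapped in $\mathcal{L}_\sigma(p_k)$ and every multiple of $b$ in $\mathcal{R}_\sigma(p_k)$, so $\mathrm{lcm}(a,b)$ would have to be on both sides.

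\textbf{The case $a=(k-2)\alpha_p$.} This value is in general not a sum of $k-2$ elements of $\{1,\dots,2k-1\}$. Your suggestion of using $a=\sum_{i=1}^{k-1}(\alpha_p-p_i)$ does not help. Together with $(t-1)a$ it gives $k$ summands, i.e.\ a subsequence of length $k+1$. Also, the values $\alpha_p-p_i$ are not known to lie left of $p_k$.

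The paper instead refines each step $ha\to(h+1)a$ into $k-1$ substeps: $S_0=ha$, $S_j=S_{j-1}+(\alpha_p-p_j)$, ending at $S_{k-1}=(h+1)a$. Each increment $\alpha_p-p_j=\sum_{i\ne j}p_i$ is realised by $k-2$ of the small elements left of $p_k$. The first crossing of $p_k$ along this finer chain gives $S_{j-1}$ and the $p_i$ ($i\ne j$), which precede $S_j$ and sum to it. These are distinct because $S_{j-1}\ge\alpha_p>p_i$.

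\textbf{The case $k=3$.} When $k=3$ and $a\in\{p_1,p_2\}$, the crossing at $t=2$ would use the value $a$ twice. The argument therefore needs $k\ge4$ in that case. The paper's Case~2 has the same blind spot, and $k=3$ is covered directly by Proposition~\ref{Prop:Extremal2Add}.
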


\begin{proof}
We will show that every $\sigma\in S_{N_k}$ has a 2-additive subsequence of length $k$. Suppose, by way of contradiction, that there exists $\sigma\in S_{N_k}$ which does not have 2-additive subsequences of length $k$. Let $p$ be the subpermutation of $\sigma$ on $\{1,2,\ldots,2k-1\}$.

By Lemma~\ref{Lemma:NonEmptyIntersection}, $U_p\cap\mathcal{L}_\sigma(p_k)\neq\emptyset$ and $V_p\cap\mathcal{R}_\sigma(p_k)\neq\emptyset$. Let $a\in U_p\cap \cl_\sigma(p_{k})$ and $b\in V_p\cap \crr_\sigma(p_{k})$. We will show that if $h$ is a positive integer and $ha\leq N_k$, then $h a\in \cl_\sigma(p_{k})$. Then by symmetry, if $h$ is positive integer and $hb\leq N_k$, then $h b\in \mathcal{R}_\sigma(p_{k})$. Since $a\in U_p$, either $a=(k-2)\alpha_p$ or $a=\alpha-p_i$ for some $i\in\{1,2,\ldots,k-1\}$.

\textbf{Case 1}: $a=(k-2)\alpha_p$. Let $S_0=(k-2)\alpha_p$ and, for all $j\in\{1,2,\ldots,k-1\}$, let $S_j=S_{j-1}+p_1+p_2+\cdots+p_{j-1}+p_{j+1}+\cdots+p_{k-1}$. We will show that $S_j\in \cl_\sigma(p_{k})$ for all $j=\{1,2,\ldots,k-1\}$. Suppose not. Let $j\in\{1,2,\ldots,k-1\}$ be the smallest such that $S_j\notin \cl_\sigma(p_{k})$. Then we have $S_j=S_{j-1}+p_1+p_2+\cdots+p_{j-1}+p_{j+1}+\cdots+p_{k-1}$ and $\{S_{j-1},p_1,p_2,\ldots,p_{k-1}\}\subseteq\cl_\sigma(S_j)$ which is a contradiction. By our construction $S_{k-1}=2(k-2)\alpha_p$ and hence $2(k-2)\alpha_p\in\cl_\sigma(p_{k})$. By induction, if $h\geq1$ and $ha\leq N_k$, then $ha\in\mathcal{L}_\sigma(p_k)$.

\textbf{Case 2}: $a=\alpha_p-p_i$ for some $i\in\{1,2,\ldots,k-1\}$. Since $2a=a+p_1+p_2+\cdots+p_{i-1}+p_{i+1}+p_{i+2}+\cdots+p_{k-1}$, $\{a,p_1,p_2,\ldots,p_{k-1}\}\subseteq\cl_\sigma(p_k)$, and $\sigma$ does not have $2$-additive subsequences of length $k$, we have $2a\in\mathcal{L}_\sigma(p_k)$. By induction, if $h\geq1$ and $ha\leq N_k$, then $ha\in\mathcal{L}_\sigma(p_k)$.

Since $a\in U_p$ and $b\in V_p$, there exist positive integers $a'$ and $b'$ such that $a'a=b'b\leq L_p$. Since $L_p\leq N_k$, we have $b'b=a'a\in \cl_\sigma(p_{k})$ and $a'a=b'b\in \crr_\sigma(p_{k})$ which is a contradiction.

Hence, every $p\in S_{N_k}$ has a $2$-additive subsequence of length $k$. By Lemma~\ref{Lemma:UppderBoundN}, $N_k\leq 9(k-2)(k^2-k)^2/4$. Therefore, for all $n\geq 9(k-2)(k^2-k)^2/4$, every $p\in S_n$ has a $2$-additive subsequence of length $k$.
\end{proof}

For all $k\geq3$, let $f(k,2)$ be the smallest $n$ such that every $p\in S_n$ has a 2-additive subsequence of length $k$. By Theorem~\ref{Theorem:Existence:Nonmonotone}, we have $f(k,2)\leq9(k-2)(k^2-k)^2/4$. Now we prove a lower bound for $f(k,2)$.

\begin{theorem}\label{Theorem:NonmonotoneLower}
For all $k\geq3$, $f(k,2)\geq(k-1)(3k-4)/2$.
\end{theorem}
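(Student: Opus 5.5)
The plan is to prove Theorem~\ref{Theorem:NonmonotoneLower} by an explicit construction. Set $M=(k-1)(3k-4)/2-1$. I will exhibit one $\tau\in S_M$ with no $2$-additive subsequence of length $k$; this gives $f(k,2)\geq M+1$. By Lemma~\ref{Lemma:Fromltol-1}, a length-$k$ subsequence $(x_1,\ldots,x_k)$ is $2$-additive exactly when one endpoint equals the sum of the other $k-1$ terms. So there are two events to rule out:
\begin{itemize}
\item[(L)] $x_1=x_2+\cdots+x_k$ (the first term is the sum of $k-1$ terms to its right);
\item[(R)] $x_k=x_1+\cdots+x_{k-1}$ (the last term is the sum of $k-1$ terms to its left).
\end{itemize}
The guiding identity is
\[
M+1=(k-1)^2+\frac{(k-1)(k-2)}{2},
\]
that is, a threshold term of size about $(k-1)^2$ plus the minimum possible sum $1+2+\cdots+(k-2)$ of $k-2$ further distinct positive integers.

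The construction splits $\{1,\ldots,M\}$ into a block $S$ of small numbers and a block $B$ of large numbers, with the threshold between them near $(k-1)^2$. The two blocks are placed consecutively, each internally monotone, and the directions are chosen so that each event is blocked by order or by size.
\begin{itemize}
\item If $S$ is written in decreasing order and placed first, then for (R) with $x_k\in S$, all predecessors of $x_k$ are larger than $x_k$, so their sum exceeds it.
\item If $B$ is written in increasing order, then for (L) with $x_1\in B$, every later term is a bigger element of $B$, so the sum exceeds $x_1$.
\item For (L) with $x_1\in S$, the later terms are either smaller elements of $S$ or elements of $B$. Any term from $B$ already exceeds $x_1$. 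Otherwise the sum is at least $1+\cdots+(k-1)=k(k-1)/2$, which must exceed every element of $S$ that could serve as $x_1$.
\end{itemize}
The remaining and genuinely arithmetic case is (R) with $x_k\in B$ and predecessors drawn from both blocks. Any sum involving at least one element of $B$ other than $x_k$ is at least $\min B+(k-2)(k-1)/2$. Sums using only elements of $S$ must avoid $B$. The identity above shows that the first bound is exactly what makes $M$ the right endpoint.

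The main obstacle is this last case: sums of $k-1$ elements of $S$ landing inside $B$. A naive two-block split forces a conflict. For the all-$S$ sums to avoid $B$, the smallest such sum must be either at most $\max S$ or greater than $M$, which is incompatible with the requirements from (L). I would first try to resolve this by refining $S$ into two sub-blocks. One sub-block would hold the numbers below $k(k-1)/2$, which cannot be sums of $k-1$ distinct terms at all. The other would be an intermediate range placed in increasing order immediately \emph{after} $B$. I would check whether placing it there prevents its sums from preceding any element of $B$, while its own (R) and (L) events are controlled by the monotonicity arguments above. I would then tune the sub-block boundaries so that every cross-block sum either exceeds $M$ or is blocked by position. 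If this fails, the fallback is to search small cases ($k=3$, where $M=4$, and $k=4$) by hand to find the correct block pattern, and then generalize it.
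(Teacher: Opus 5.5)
Your proposal never exhibits a permutation. The decisive case, (R) with $x_k\in B$, is deferred to a refinement you ``would check'' and to a fallback search, so as written nothing is proved. Moreover, the refinement you sketch cannot work.

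Since $k(k-1)/2=1+2+\cdots+(k-1)\le M$, any permutation of $\{1,\ldots,M\}$ in which $1,2,\ldots,k-1$ all lie on the same side of $k(k-1)/2$ already contains a $2$-additive subsequence of length $k$. Take those $k-1$ terms together with $k(k-1)/2$ as the last (or first) term. Your first sub-block $\{1,\ldots,k(k-1)/2-1\}$ contains $1,\ldots,k-1$ and precedes everything else. So $k(k-1)/2$, whether it lands in $B$ or in the sub-block after $B$, comes after all of them. For $k=3$ this is the subsequence $(2,1,3)$. No tuning of the remaining boundaries repairs this.

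The missing idea is that the small block should contain only the $k-2$ numbers $1,\ldots,k-2$, so that (R) is blocked by \emph{counting} rather than by size. The large block should also run in the opposite direction to yours. The paper uses
\[
p=(1,2,\ldots,k-2,M,M-1,\ldots,k-1).
\]
\begin{itemize}
\item For $s\ge k-1$, the only terms left of $s$ that are smaller than $s$ are $1,\ldots,k-2$. These are too few to supply $k-1$ summands.
\item For $s\ge k-1$, every term right of $s$ is at least $k-1$. So any $k-1$ of them sum to at least $(k-1)+k+\cdots+(2k-3)=(k-1)(3k-4)/2>M\ge s$.
\item Any $s\le k-2<k(k-1)/2$ is not a sum of $k-1$ distinct positive integers at all.
\end{itemize}

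This is also the right reading of your identity. $M+1=\sum_{i=k-1}^{2k-3}i$ is the least sum of $k-1$ distinct integers that are all at least $k-1$. So the threshold between the blocks belongs at $k-1$, not near $(k-1)^2$.

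Note also the orientation. You place the small numbers on the same side of each $s\in B$ as the \emph{smaller} elements of $B$, which is exactly what lets their sums hit $s$. In the working construction the small block sits on the side of the \emph{larger} elements of the big block.
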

\begin{proof}
Write $m=(k-1)(3k-4)/2$. Consider the permutation \[p=(1,2,\ldots,k-2,m-1,m-2,\ldots,k-1)\in S_{m-1}.\] 
To prove that $f(k,2)\geq(k-1)(3k-4)/2$, it suffices to show that $p$ does not have $2$-additive subsequences of length $k$. Notice that, by Lemma~\ref{Lemma:Fromltol-1}, $p$ does not have $2$-additive subsequences of length $k$ if and only if, for all $s\in\{1,2,\ldots,m-1\}$, neither $\cl_p(s)$ nor $\crr_p(s)$ contains a subset of size $k-1$ which sums to $s$. This is obviously true if $s<(k-1)k/2$. So we suppose that $s\geq(k-1)k/2$. By the construction of $p$, $\cl_p(s)$ only contains $k-2$ numbers that are smaller than $s$. So $\cl_p(s)$ does not contain a subset of size $k-1$ which sums to $s$. As for $\crr_\sigma(s)$, the sum of a subset of size $k-1$ is at least
\[
\sum_{i=k-1}^{2k-3}i=\frac{1}{2}(k-1)(3k-4)>m-1.
\]
Hence, $\crr_\sigma(s)$ does not contain a subset of size $k-1$ which sums to $s$. Therefore, $p$ does not have a $2$-additive subsequence of length $k$.
\end{proof}
The lower bound in Theorem~\ref{Theorem:NonmonotoneLower} matches the exact value for $f(k,2)$ when $k=3$. 

\begin{proposition}\label{Prop:Extremal2Add}
$5$ is the smallest $n$ such that every $p\in S_n$ has a $2$-additive subsequence of length three; that is $f(3,2)=5$.
\end{proposition}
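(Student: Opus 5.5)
The lower bound $f(3,2)\geq 5$ is Theorem~\ref{Theorem:NonmonotoneLower} with $k=3$, since $(k-1)(3k-4)/2=5$. Concretely, the witness is $p=(1,4,3,2)\in S_4$. So the task is the upper bound: every $\sigma\in S_5$ has a $2$-additive subsequence of length three. By Lemma~\ref{Lemma:FromAll-nToSome-n} this gives the statement for all $n\ge 5$.

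By Lemma~\ref{Lemma:Fromltol-1} with $k=3$, $\ell=2$, a triple $(x,y,z)$ is $2$-additive iff $x+y=z$ or $y+z=x$. In other words, we need some $s$ such that two distinct smaller elements summing to $s$ both lie on the same side of $s$. The relevant sums in $\{1,\dots,5\}$ are $3=1+2$, $4=1+3$, $5=1+4$ and $5=2+3$.

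I would argue by contradiction, supposing $\sigma\in S_5$ avoids all of these, and locating $1$ and $2$ relative to $3$.
\begin{itemize}
\item Avoiding $3=1+2$ forces $1$ and $2$ to lie on opposite sides of $3$.
\item By the reversal symmetry (reversing $\sigma$ preserves the condition, since it just swaps $\mathcal{L}$ and $\mathcal{R}$), assume $1\in\cl_\sigma(3)$ and $2\in\crr_\sigma(3)$. So the relative order of $1,2,3$ is $1,3,2$.
\item For $4=1+3$, the elements $1$ and $3$ must be separated by $4$. Hence $4$ lies between $1$ and $3$, giving the order $1,4,3,2$.
\item Now place $5$. We need $1,4$ separated by $5$ and $2,3$ separated by $5$. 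The first forces $5$ between $1$ and $4$, while the second forces $5$ between $3$ and $2$. These positions are disjoint, a contradiction.
\end{itemize}

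The only step needing care is the symmetry reduction and checking that the two placement constraints on $5$ are incompatible. Both are immediate from the derived order $1,4,3,2$, so I expect no real obstacle: it is a short finite case check.
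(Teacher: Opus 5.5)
Your proof is correct and matches the paper's: the lower bound comes from Theorem~\ref{Theorem:NonmonotoneLower}, and your witness $(1,4,3,2)$ is exactly that construction for $k=3$. The upper bound uses the same reversal-symmetry reduction to the order $1,3,2$, forces $4$ between $1$ and $3$, and then shows that $5$ cannot separate both $\{1,4\}$ and $\{2,3\}$; the paper phrases this last step as forcing $p=(1,4,3,5,2)$ and exhibiting the $2$-additive subsequence $(1,4,5)$.
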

\begin{proof}
    By Theorem~\ref{Theorem:NonmonotoneLower}, we have $f(3,2)\geq(3-1)(3\cdot3-4)/2=5$. To show that $f(3,2)=5$, we only need to show that every $p\in S_5$ has a $2$-additive subsequence of length three. One way to do this is to check all the permutations of $\{1,2,3,4,5\}$. However, this becomes impractical for longer permutations and hence we provide a proof using proof-by-contradiction and elementary case analysis. This approach will also be used in Sections~\ref{Section:l-add} and \ref{Section:Monotone2-Add} for longer permutations.
    
    Suppose, by way of contradiction, that $p\in S_5$ does not have $2$-additive subsequences of length three. By symmetry, we assume that the subpermutation of $p$ on $\{1,2\}$ is $(1,2)$. Since $1+2=3$, we have $3\in\mathcal{L}_p(2)\cap\mathcal{R}_p(1)$. Hence, the subpermutation of $p$ on $\{1,2,3\}$ is $(1,3,2)$. Since $1+3=4$ and $3+2=5$, we must have $p=(1,4,3,5,2)$. This is a contradiction because $(1,4,5)$ is a $2$-additive subsequence of $p$. Hence every $p\in S_5$ has a $2$-additive subsequence of length three. Therefore, $f(3,2)=5$.
\end{proof}
\section{$\ell$-Additive Subsequences}\label{Section:l-add}
We study $\ell$-additive subsequences in this section. We first note that if $k=\ell$, then we can find permutations without $\ell$-additive subsequences of length $k$.

\begin{proposition}
    If $k=\ell$, then, for all $n$, $(1,2,\ldots,n)\in S_n$ does not have an $\ell$-additive subsequence of length $k$.
\end{proposition}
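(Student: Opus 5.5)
We argue directly from the definition of an $\ell$-additive sequence, with $k=\ell$. A subsequence of the identity permutation $(1,2,\ldots,n)$ is strictly increasing, so any subsequence of length $k$ has the form $(x_1,x_2,\ldots,x_k)$ with $x_1<x_2<\cdots<x_k$. We will show that neither of the two defining equalities $x_1+\cdots+x_k=kx_1$ or $x_1+\cdots+x_k=kx_k$ can hold.

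For the first equality, compare the sum termwise with $x_1$. Since $k\geq 2$ and $x_i>x_1$ for every $i\geq2$, we get
\[
x_1+x_2+\cdots+x_k> kx_1 .
\]
For the second equality, compare termwise with $x_k$. Since $x_i<x_k$ for every $i\leq k-1$, we get
\[
x_1+x_2+\cdots+x_k< kx_k .
\]
So the sum lies strictly between $kx_1$ and $kx_k$, and it equals $\ell x_1$ or $\ell x_k$ for neither value. Hence no subsequence of length $k$ is $\ell$-additive.

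Equivalently, via Lemma~\ref{Lemma:Fromltol-1}, the condition becomes $\sum_{i=1}^{k-1}x_i=(k-1)x_k$ or $\sum_{i=2}^{k}x_i=(k-1)x_1$. Each of these fails because each side is a sum of $k-1$ terms that are all strictly smaller than (respectively, strictly larger than) the compared term.

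There is no real obstacle here. The only point needing care is that the inequalities are strict, which holds because the entries of a permutation are distinct, together with $k\geq2$ so that at least one term differs from $x_1$ and from $x_k$. The case $n<k$ is vacuous.
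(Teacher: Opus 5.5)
Your proof is correct and is essentially the paper's argument: a subsequence of the identity is strictly increasing, so its sum lies strictly between $kx_1=\ell x_1$ and $kx_k=\ell x_k$. Your note that $k\geq 2$ is needed for the strict inequalities is a clarification the paper leaves implicit; for $k=\ell=1$ the statement would fail.
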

\begin{proof}
    Let $(x_1,x_2,\ldots,x_k)$ be a subsequence of $(1,2,\ldots,n)$. Then we have $x_1+x_2+\cdots+x_k>kx_1=\ell x_1$ and $x_1+x_2+\cdots+x_k<kx_k=\ell x_k$. Hence, $(1,2,\ldots,n)$ does not have an $\ell$-additive subsequence.
\end{proof}

We don't know the answer to the general case when $k\neq\ell$. However, we have affirmative answers to two cases. For the first case, we have $k=4$ and $\ell=3$; and for the second case, we have $k=3$ and $\ell=4$. For positive integers $k$ and $\ell$, let $f(k,\ell)$ be the smallest $n$, if it exists, such that every $p\in S_n$ has an $\ell$-additive subsequence of length $k$.

\begin{proposition}
$8$ is the smallest $n$ such that every $p\in S_n$ has a $3$-additive subsequence of length four; that is, $f(4,3)=8$.
\end{proposition}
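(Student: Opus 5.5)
The proof has two parts: a permutation of $\{1,2,\ldots,7\}$ with no $3$-additive subsequence of length four, and an argument that every $p\in S_8$ has one. Throughout I will use Lemma~\ref{Lemma:Fromltol-1} in the form needed here. A permutation $p$ has no $3$-additive subsequence of length four exactly when, for every value $s$, neither $\cl_p(s)$ nor $\crr_p(s)$ contains three elements summing to $2s$. Since three distinct positive integers sum to at least $6$, only $s\geq3$ matters. For small $s$ the forbidden triples are few: $2s=6$ forces $\{1,2,3\}$, which is impossible because $3$ is $s$ itself; $2s=8$ forces $\{1,2,5\}$ or $\{1,3,4\}$, and so on.

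For the lower bound $f(4,3)\geq 8$, I will exhibit an explicit $p\in S_7$ and verify the condition above for $s=3,\ldots,7$. I will build it greedily, placing the large values first and controlling which triples sum to $2s$ on each side of $s$. A natural first try, in the spirit of the extremal example in Theorem~\ref{Theorem:NonmonotoneLower}, is a concatenation of a short increasing block and a decreasing block. The fully decreasing permutation already fails at $s=6$, since $5+4+3=12$, so the placement of $3,4,5$ relative to $6$ and $7$ has to be adjusted. Each candidate is checked by listing, for each $s$, the triples of $\{1,\ldots,7\}\setminus\{s\}$ with sum $2s$ and confirming that each such triple is split across $s$. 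There are only a handful of triples per $s$, so this is a short table.

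For the upper bound, I will argue by contradiction in the style of Proposition~\ref{Prop:Extremal2Add}. Suppose $p\in S_8$ has no $3$-additive subsequence of length four. Reversal preserves the property, so I may normalize the relative order of two small elements, say assume $1$ precedes $2$. I then propagate forced placements using the rule: if $a,b,c$ all lie on the same side of $s$ and $a+b+c=2s$, contradiction. Hence for every triple $\{a,b,c\}$ summing to $2s$, the element $s$ must lie strictly between the extreme positions of $a,b,c$. With $\{1,2,3\}\mapsto 3$, $\{1,2,5\}\mapsto4$, $\{1,3,4\}\mapsto4$, $\{1,2,7\}\mapsto5$, $\{1,3,6\}\mapsto5$, $\{2,3,5\}\mapsto5$, $\{1,4,5\}\mapsto5$, and the analogous triples for $s=6,7,8$, each constraint says that $s$ separates the triple. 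I will branch on the relative order of $\{1,2,3\}$ (only a few cases after symmetry), then insert $4$, $5$, and so on, pruning each branch as soon as some triple lands on one side of its half-sum.

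The main obstacle is keeping this case tree small. I will first exploit the densest constraints, those for $s=4$ and $s=5$, which involve only the elements $1$ through $7$ and have many triples. Their betweenness conditions should leave very few relative orders of $\{1,\ldots,5\}$. After that, $6$, $7$, $8$ are inserted using the triples summing to $12$, $14$, $16$ (for example $\{1,7,8\}$, $\{3,5,8\}$, $\{4,5,7\}$), and a contradiction is found in every branch. If the branching still grows too large, I would instead fix the position of $8$ first: many triples sum to $16$, and each forces $8$ to separate it, which strongly restricts where $8$ can sit. Combining the two directions gives $f(4,3)=8$.
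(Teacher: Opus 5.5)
Your write-up is a plan rather than a proof, and both halves are missing their actual content. For $f(4,3)\geq 8$ you never produce a permutation of $\{1,\ldots,7\}$, and that witness \emph{is} the lower bound. The paper uses $(2,4,6,7,5,3,1)$, which can be checked in a few lines against the valid triples listed below. For $f(4,3)\leq 8$, no branch of the proposed case tree is actually closed.

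More seriously, the constraint table you plan to prune with is wrong. You list $\{1,3,4\}\mapsto 4$, $\{2,3,5\}\mapsto 5$, $\{1,4,5\}\mapsto 5$, and, for $s=8$, the examples $\{1,7,8\}$ and $\{3,5,8\}$. Each of these contains $s$ itself, so it corresponds to no subsequence of four distinct terms. This is exactly the objection you raised yourself for $\{1,2,3\}$ and $s=3$. The hypothesis imposes no betweenness condition for such triples, so discarding branches with them would make the argument unsound.

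The correct list for $n=8$ is:
$s=4$: $\{1,2,5\}$;
$s=5$: $\{1,2,7\},\{1,3,6\}$;
$s=6$: $\{1,3,8\},\{1,4,7\},\{2,3,7\},\{3,4,5\}$;
$s=7$: $\{1,5,8\},\{2,4,8\},\{3,5,6\}$;
$s=8$: $\{3,6,7\},\{4,5,7\}$.

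Your structural expectations therefore fail. The only condition internal to $\{1,\ldots,5\}$ is that $4$ separates $\{1,2,5\}$, which leaves half of the $120$ orders rather than ``very few''. Likewise, only two triples sum to $16$.

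The real leverage lies elsewhere. The paper assumes $1$ precedes $2$ and splits on the position of $5$ relative to $1$ and $2$. It then places $4$ via $\{1,2,5\}$, and $7$ and $8$ via $\{1,2,7\}$, $\{1,5,8\}$, $\{2,4,8\}$. Most branches end with $\{4,5,7\}$ on one side of $8$; only one subcase needs $\{1,4,7\}$, $\{1,3,6\}$, $\{2,3,7\}$.
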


\begin{proof}
The permutation $(2,4,6,7,5,3,1)\in S_7$ does not have $3$-additive subsequences of length four. Hence $f(4,3)\geq8$. To show that $f(4,3)=8$, it remains to show that every $p\in S_8$ has a $3$-additive subsequence of length four. By Lemma~\ref{Lemma:Fromltol-1}, it suffices to show that every $p\in S_8$ has a subsequence $(x_1,x_2,x_3,x_4)$ such that either $x_1+x_2+x_3=2x_4$ or $x_2+x_3+x_4=2x_1$.  Suppose, by way of contradiction, that $p\in S_8$ is a permutation that does not have this property. WLOG, we assume that the subpermutation on $\{1,2\}$ is $(1,2)$. Then the subpermutation of $p$ on $\{1,2,5\}$ is $(5,1,2)$, $(1,2,5)$, or $(1,5,2)$.

\textbf{Case 1}: The subpermutation on $\{1,2,5\}$ is $(5,1,2)$. Since $1+2+5=2\cdot4$, we have $4\in\mathcal{R}_p(5)\cap\mathcal{L}_p(2)$. Since $1+2+7=2\cdot5$ and $1,2\in\mathcal{R}_p(5)$, we have $7\in\mathcal{L}_p(5)$. Since $1+5+8=2\cdot7$ and $1,5\in\mathcal{R}_p(7)$, we have $8\in\mathcal{L}_p(7)$. Hence $(8,7,5,4)$ is a subpermutation of $p$ with $4+5+7=2\cdot8$. This is a contradiction.

\textbf{Case 2}: The subpermutation on $\{1,2,5\}$ is $(1,2,5)$. Since $1+2+5=2\cdot4$, we have $4\in\mathcal{R}_p(1)\cap\mathcal{L}_p(5)$. Since $1+2+7=2\cdot5$ and $1,2\in\mathcal{L}_p(5)$, we have $7\in\mathcal{R}_p(5)$. Since $1+5+8=2\cdot7$ and $1,5\in\mathcal{L}_p(7)$, we have $8\in\mathcal{R}_p(7)$. Hence $(4,5,7,8)$ is a subpermutation of $p$ with $4+5+7=2\cdot8$. This is a contradiction.

\textbf{Case 3}: The subpermutation on $\{1,2,5\}$ is $(1,5,2)$. Since $1+2+5=2\cdot4$, we have $4\in\mathcal{R}_p(1)\cap\mathcal{L}_p(2)$. So the subpermutation on $\{1,2,4,5\}$ is either $(1,4,5,2)$ or $(1,5,4,2)$. Now we split Case 3 into six disjoint subcases based on the subpermutation on $\{1,2,4,5\}$ and the location of $8$.

\textit{Subcase 3.1}: The subpermutation on $\{1,2,4,5\}$ is $(1,4,5,2)$ and $8\in\mathcal{R}_p(5)$. Since $1+5+8=2\cdot7$ and $\{1,5\}\subseteq\mathcal{L}_p(8)$, we must have $7\in\mathcal{L}_p(8)$. Now the subpermutation on $\{4,5,7,8\}$ is $(4,5,7,8)$, $(4,7,5,8)$, or $(7,4,5,8)$. Since $4+5+7=2\cdot8$, we have a contradiction.

\textit{Subcase 3.2}: The subpermutation on $\{1,2,4,5\}$ is $(1,4,5,2)$ and $8\in\mathcal{L}_p(4)$. Since $2+4+8=2\cdot7$ and $\{2,4\}\subseteq\mathcal{R}_p(8)$, we have $7\in\mathcal{R}_p(8)$. Now the subpermutation on $\{4,5,7,8\}$ is $(8,7,4,5)$, $(8,4,7,5)$, or $(8,4,5,7)$. Since $4+5+7=2\cdot8$, we have a contradiction.

\textit{Subcase 3.3}: The subpermutation on $\{1,2,4,5,8\}$ is $(1,4,8,5,2)$. Since $1+5+8=2\cdot7$ and $\{1,8\}\subseteq\mathcal{L}_p(5)$, we have $7\in\mathcal{L}_p(5)$. Since $2+4+8=2\cdot7$ and $\{2,8\}\subseteq\mathcal{R}_p(4)$, we have $7\in\mathcal{R}_p(4)$. Since $1+4+7=2\cdot6$, we have $6\in\mathcal{L}_p(7)\subseteq\mathcal{L}_p(5)$. Since $1+3+6=2\cdot5$ and $\{1,6\}\subseteq\mathcal{L}_p(5)$, we have $3\in\mathcal{R}_p(5)$. Now we have $\{2,3,7\}\subseteq\mathcal{R}_p(6)$ and $2+3+7=2\cdot6$ which is a contradiction.

\textit{Subcase 3.4}: The subpermutation on $\{1,2,4,5\}$ is $(1,5,4,2)$ and $8\in\mathcal{L}_p(5)$. Since $1+5+8=2\cdot7$ and $\{1,8\}\subseteq\mathcal{L}_p(5)$, we have $7\in\mathcal{L}_p(5)$. Since $2+4+8=2\cdot7$ and $2,4\in\mathcal{R}_p(8)$, we have $7\in\mathcal{R}_p(8)$. Now $(8,7,5,4)$ is a subpermutation of $p$ with $4+5+7=2\cdot8$. This is a contradiction.

\textit{Subcase 3.5}: The subpermutation on $\{1,2,4,5\}$ is $(1,5,4,2)$ and $8\in\mathcal{R}_p(4)$. Since $2+4+8=2\cdot7$ and $\{2,8\}\subseteq\mathcal{R}_p(4)$, we have $7\in\mathcal{R}_p(4)$. Since $1+5+8=2\cdot7$ and $\{1,5\}\subseteq\mathcal{L}_p(8)$, we have $7\in\mathcal{L}_p(8)$. Now $(5,4,7,8)$ is a subpermutation of $p$ with $4+5+7=2\cdot8$. This is a contradiction.

\textit{Subcase 3.6}: The subpermutation on $\{1,2,4,5,8\}$ is $(1,5,8,4,2)$. Since $2+4+8=2\cdot7$, we have $7\in\mathcal{R}_p(8)$. Now $(1,5,8,7)$ is a subpermutation of $p$ with $1+5+8=2\cdot7$. This is a contradiction.
\end{proof}    

\begin{proposition}
$13$ is the smallest $n$ such that very $p\in S_n$ has a $4$-additive subsequence of length three; that is, $f(3,4)=13$.
\end{proposition}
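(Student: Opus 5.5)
The proof will have two parts, following the model of the proof that $f(4,3)=8$: an explicit permutation of $\{1,\ldots,12\}$ with no $4$-additive subsequence of length three, and a contradiction argument showing every $p\in S_{13}$ has one.

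By Lemma~\ref{Lemma:Fromltol-1}, a triple $(x,y,z)$ is $4$-additive if and only if $x+y=3z$ or $y+z=3x$. So $p$ avoids $4$-additive triples exactly when, for every $s$, no two elements of $\mathcal{L}_p(s)$ sum to $3s$ and no two elements of $\mathcal{R}_p(s)$ sum to $3s$. Only $s\le 8$ matter in $S_{13}$, since $3s\le 25$ forces $s\le 8$; for $S_{12}$ we need $s\le 7$. For $s\le 7$ the relevant pairs are:
\begin{itemize}
\item $s=1$: $\{1,2\}$ (sum $3$);
\item $s=2$: pairs summing to $6$ avoiding $2$;
\item $s=3$: pairs summing to $9$;
\item and so on up to $s=7$: pairs summing to $21$, namely $\{9,12\},\{10,11\}$.
\end{itemize}

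\textbf{Lower bound.} I would search by hand for an element of $S_{12}$ in which, for each $s$, the partner pairs summing to $3s$ are split by $s$. A natural attempt puts the large numbers near the small $s$ they serve. For example, each of $9,\dots,12$ should sit on opposite sides of $6$ and $7$ from its complement. I would build the permutation greedily from the constraints for $s=7,6,5,\ldots$ and then verify it. If no clean pattern emerges, I would fall back on an exhibited permutation found by computer and simply state it with a verification table.

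\textbf{Upper bound.} Suppose $p\in S_{13}$ has no $4$-additive triple. By reversal symmetry, assume $1$ precedes $2$. The constraint for $s=1$ is then vacuous, since $\{1,2\}$ is not split from $1$ in a useful way and pairs must avoid $1$. So I would start from larger $s$, where the pairs are few and rigid:
\begin{itemize}
\item $s=8$: pairs summing to $24$ are $\{11,13\}$ and $\{12,12\}$ (invalid), so $8$ lies between $11$ and $13$.
\item $s=7$: $7$ separates $9,12$ and separates $10,11$.
\item $s=6$: $6$ separates each of $\{5,13\},\{7,11\},\{8,10\}$.
\item $s=5$ and $s=4$ give similar separations.
\end{itemize}
Each separation is a betweenness constraint, and I would propagate them as in the case analysis for $f(4,3)$. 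The branching would be on the relative order of $\{7,8,10,11,13\}$, with the chains of betweenness constraints used to reach a contradiction in each branch.

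The main obstacle is keeping this case analysis finite and readable. The betweenness constraints interact in a nontrivial cyclic way, for instance $8$ between $11,13$; $7$ between $10,11$; $6$ between $8,10$ and $7,11$. My first move would be to fix the order of $11$ and $13$ and place $8$ between them. Then I would locate $10$ and $7$, and derive forced positions for $6$, $5$ and $4$ until some pair ends up on the same side of its $s$. If the branching explodes, I would verify the statement by exhaustive computer search with pruning as a backup.
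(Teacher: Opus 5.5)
Your proposal has a genuine gap, and it goes beyond the fact that neither inequality is actually carried out. For $f(3,4)\ge 13$ you give no permutation of $\{1,\ldots,12\}$; the paper simply exhibits $(9,10,7,1,4,3,2,5,6,11,12,8)$. For $f(3,4)\le 13$ you only describe a case analysis, with exhaustive search as a fallback.

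More seriously, the constraint system you plan to propagate in $S_{13}$ is wrong at $s=7$. You list only $\{9,12\}$ and $\{10,11\}$, which is the list for $S_{12}$. In $S_{13}$ you also have $8+13=3\cdot 7$, so $7$ must separate $8$ and $13$. (Your entry $\{1,2\}$ for $s=1$ is also spurious, since it contains $s$, though you later treat $s=1$ as vacuous.)

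The omission of $\{8,13\}$ is fatal, not cosmetic. Every other betweenness constraint for $S_{13}$ is satisfied by $(9,10,1,7,4,3,2,5,6,11,8,13,12)$, whose only $4$-additive subsequence of length three is $(7,8,13)$. So propagating the constraints as you listed them cannot reach a contradiction, however you branch on $\{7,8,10,11,13\}$.

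Once $\{8,13\}$ is included, no branching is needed: the paper's argument is a single forced chain.
\begin{itemize}
\item Your normalization ($1$ before $2$) is equivalent, via $1+5=3\cdot 2$, to the paper's ($1$ before $5$). Either way the order is $(1,2,5)$.
\item From $2+13=3\cdot 5$, $13\in\mathcal{R}_p(5)$.
\item From $7+8=3\cdot 5$, $5$ separates $7$ and $8$. From $8+13=3\cdot 7$, $7$ separates $8$ and $13$. If $8$ were right of $5$, then $7$ would be left of both $8$ and $13$. Hence $8\in\mathcal{L}_p(5)$ and $7\in\mathcal{R}_p(5)$.
\item From $5+7=3\cdot 4$, $4\in\mathcal{R}_p(5)$. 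Then from $4+5=3\cdot 3$, $3\in\mathcal{R}_p(5)$.
\item Now $1$ and $8$ both precede $3$, while $1+8=3\cdot 3$. This is a contradiction.
\end{itemize}
The decisive step in this chain uses exactly the constraint your list dropped.
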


\begin{proof}
$(9,10,7,1,4,3,2,5,6,11,12,8)\in S_{12}$ does not have $4$-additive subsequences of length three. Hence, we have $f(3,4)\geq13$. To show that $f(3,4)=13$, it remains to show that every $p\in S_{13}$ has a $4$-additive subsequence of length three. By Lemma~\ref{Lemma:Fromltol-1}, it suffices to show that every $p\in S_{13}$ has a subsequence $(x_1,x_2,x_3)$ such that either $x_1+x_2=3x_3$ or $x_2+x_3=3x_1$. Suppose, for a contradiction, that $p\in S_{13}$ is a permutation that does not have this property. WLOG, we assume that the subpermutation on $\{1,5\}$ is $(1,5)$. 

    Since $1+5=3\cdot2$, the subpermutation on $\{1,2,5\}$ is $(1,2,5)$. Since $2+13=3\cdot5$, we have $13\in\mathcal{R}_p(5)$. Since $5+13=3\cdot6$, $6\in\mathcal{R}_p(5)\cap \mathcal{L}_p(13)$. Now the subpermutation on $\{1,2,5,6,13\}$ is $(1,2,5,6,13)$.

    Since $7+8=3\cdot5$, the subpermutation of $p$ on $\{5,7,8\}$ is either $(7,5,8)$ or $(8,5,7)$. Since $8+13=3\cdot7$, the subpermuation on $(7,8,13)$ is either $(8,7,13)$ or $(13,7,8)$. Since $13\in\mathcal{R}_p(5)$, $(13,7,8)$ is impossible. Hence $7\in\mathcal{R}_p(5)$ and $8\in\mathcal{L}_p(5)$.

    Since $5+7=3\cdot4$, $4\in\mathcal{R}_p(5)$. Since $5+4=3\cdot3$, $3\in\mathcal{R}_p(5)$. Now either $(1,8,3)$ or $(8,1,3)$ is a subsequence of $p$ and $1+8=3\cdot3$ which is a contradiction.
\end{proof}    
Now we look at the number of $\ell$-additive subsequences of length $k$ in a permutation. Given $k$, $\ell$, $n$, and $p\in S_n$, let $F_p(k,\ell,n)$ be the number of $\ell$-additive subsequences of length $k$ in $p$. Then let 
\[
F(k,\ell,n)=\min_{p\in S_n}F_p(k,\ell,n).
\]

\begin{theorem}\label{Theorem:NumberOfAdditiveGeneral}
For all $k$, $\ell$, and $n$, we have
    \[
    F(k,\ell,n)\leq\frac{2}{k}\left(\frac{e}{k-1}\right)^{k-1}n^{k-1};
    \]
    and given $k$ and $\ell$, if $f(k,\ell)$ exists, then for all large enough $n$, we have
    \[
    F(k,\ell,n)\geq \frac{n}{f(k,\ell)[\log n-\log f(k,\ell)+2]}-\frac{f(k,\ell)}{\log f(k,\ell)-4}+1.
    \]
\end{theorem}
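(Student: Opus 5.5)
The two inequalities are independent, and I would prove them separately: a first-moment (averaging) argument for the upper bound, and a dilation argument combined with explicit prime-counting estimates for the lower bound.

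\textbf{Upper bound.} I would show that a uniformly random $p\in S_n$ has at most $\frac{2}{k}\binom{n}{k-1}$ $\ell$-additive subsequences of length $k$ in expectation, and then use $\binom{n}{k-1}\le \left(\frac{en}{k-1}\right)^{k-1}$.
\begin{itemize}
\item A subsequence of length $k$ of $p$ is determined by its underlying $k$-subset $S\subseteq\{1,\ldots,n\}$, so it suffices to bound the number of $k$-subsets whose induced subsequence is $\ell$-additive.
\item If $S$ is $\ell$-additive in $p$, then there is some $s\in S$ with $\sum S=\ell s$. By Lemma~\ref{Lemma:Fromltol-1} this means $\sum (S\setminus\{s\})=(\ell-1)s$.
\item Hence $s$ is determined by the $(k-1)$-set $T=S\setminus\{s\}$. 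So the number of pairs $(S,s)$ with $\sum S=\ell s$ is at most $\binom{n}{k-1}$.
\item For such a pair, $S$ can only be counted in $p$ if $s$ occupies the first or the last position among the elements of $S$ in $p$. In a uniformly random permutation this has probability $2/k$.
\item Summing over pairs gives $\mathbb{E}[F_p(k,\ell,n)]\le \frac{2}{k}\binom{n}{k-1}$, so some $p$ achieves at most this many.
\end{itemize}

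\textbf{Lower bound.} Write $f=f(k,\ell)$. The key observation is that $\ell$-additivity is invariant under scaling.
\begin{itemize}
\item For any positive integer $d\le n/f$, consider the subpermutation of $p$ on $\{d,2d,\ldots,fd\}$. Dividing its entries by $d$ gives a permutation in $S_f$, which by definition of $f$ contains an $\ell$-additive subsequence of length $k$. Multiplying back by $d$ gives an $\ell$-additive subsequence of $p$ whose entries all lie in $d\cdot\{1,\ldots,f\}$.
\item To make these subsequences distinct, restrict $d$ to primes in $(f,n/f]$. Suppose $dA=d'B$ with $A,B\subseteq\{1,\ldots,f\}$ and $d\ne d'$ both such primes. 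Then $d$ divides $d'b$ for some $b\le f<d$, which is impossible.
\item The value $d=1$ gives one further subsequence, with all entries at most $f$. It differs from all the others, since every entry of those exceeds $f$.
\item This yields $F(k,\ell,n)\ge \pi(n/f)-\pi(f)+1$.
\end{itemize}

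\textbf{Converting to the stated formula.} I would apply explicit Rosser--Schoenfeld/Dusart-type estimates. A lower bound of the form $\pi(x)\ge x/(\log x+2)$ is valid for all $x\ge 2$, and applying it at $x=n/f$ gives the first term. An upper bound of the form $\pi(x)\le x/(\log x-4)$ is valid once $\log x>4$, and applying it at $x=f$ gives the second term. "Large enough $n$" guarantees $n/f\ge 2$.

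\textbf{Main obstacle.} The combinatorics is straightforward; the delicate step is matching the exact explicit prime-number-theorem constants. In particular, I need to check that the $\pi(f)\le f/(\log f-4)$ estimate is legitimately available for the relevant values of $f$, or else interpret the statement so that this term is handled whenever $\log f\le 4$. I would first try the crude bounds $\pi(x)\ge x/(\log x+2)$ and $\pi(x)\le x/(\log x-4)$, both of which follow from classical Rosser--Schoenfeld-type results.
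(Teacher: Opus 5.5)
Your proposal is correct and follows the paper's proof essentially step for step. For the upper bound you use the same first-moment count; the paper double-counts ordered $\ell$-additive $k$-tuples over all of $S_n$, each contained in $n!/k!$ permutations, instead of phrasing it as an expectation with probability $2/k$. For the lower bound you use the same dilation to $\{a,2a,\ldots,f(k,\ell)a\}$ for primes $a\in(f(k,\ell),n/f(k,\ell)]$, plus one extra subsequence inside $\{1,\ldots,f(k,\ell)\}$, converted via Rosser's bounds $x/(\log x+2)<\pi(x)<x/(\log x-4)$.

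The obstacle you flag is genuine but harmless. Rosser's upper bound needs $x\ge 55$, and the paper applies it to $f(k,\ell)$ without comment, even though the known values $5,8,13$ are all below $55$. When $\log f(k,\ell)<4$ the subtracted term is positive, and it is absorbed for large $n$ because $\pi(x)-x/(\log x+2)\ge 2x/(\log x(\log x+2))\to\infty$, using $\pi(x)\ge x/\log x$ for $x\ge 17$.
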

\begin{proof}
We first prove the upper bound. The number of permutations containing a fixed $\ell$-additive subsequence of length $k$ is $(n-k)!{n \choose k}$. There are ${n\choose k-1}$ ways of choosing $k-1$ distinct numbers in $\{1,2,\ldots,n\}$; and for each $k-1$ distinct numbers, there are $(k-1)!$ ways to arrange them and \textit{at most} two ways to add another number to make it an $\ell$-additive sequence of length $k$. Hence, the total number of $\ell$-additive subsequences of length $k$ in all permutations of $\{1,2,\ldots,n\}$ is \textit{at most} 
    \[
    2{n\choose k-1}(k-1)!(n-k)!{n \choose k}=\frac{2}{k}{n\choose k-1}n!.
    \]
Since there are $n!$ permutations in $S_{n}$, there exists some $p\in S_n$ which has \textit{at most} \[\frac{2}{k}{n\choose k-1}\frac{n!}{n!}=\frac{2}{k}{n\choose k-1}\leq\frac{2}{k}\left(\frac{en}{k-1}\right)^{k-1}=\frac{2}{k}\left(\frac{e}{k-1}\right)^{k-1}n^{k-1}\]
$\ell$-additive subsequences of length $k$. This completes the proof of the upper bound.

Now we prove the lower bound. Consider a large enough $n$ and $p\in S_n$. Write $b=f(k,\ell)$. Let $a$ be a prime such that $a>b$ and $ab\leq n$. Since every permutation of $\{1,2,\ldots,b\}$ has an $\ell$-additive subsequence of length $k$, every subpermutation of $p$ on $\{a,2a,\ldots,ba\}$ has an $\ell$-additive subsequence of length $k$. A result by Rosser \cite{Rosser1941} on the Prime Number Theorem \cite[p.~274]{Nathanson2000} implies that, there are at least $(n/b)/(\log(n/b)+2)$ primes $a$ in $\{1,2,\ldots,n\}$ such that $ab\leq n$  and at most $b/(\log b-4)$ primes in $\{1,2,\ldots,b\}$. Hence, the number of primes $a$ such that $a>b$ and $ab\leq n$ is at least
\[
\frac{n/b}{\log(n/b)+2}-\frac{b}{\log b-4}=\frac{n}{b(\log n-\log b+2)}-\frac{b}{\log b-4}.
\]
For each of these primes $a$, the subpermutation of $p$ on $\{a,2a,\ldots,ba\}$ has an $\ell$-additive subsequence of length $k$. Since every term of each $\ell$-additive subsequence obtained this way has a unique prime factor greater than $b$, these $\ell$-additive subsequences are all distinct. Adding an $\ell$-additive subsequence in the subpermutation of $p$ on $\{1,2,\ldots,b\}$, $p$ has \textit{at least} $n/[b(\log n-\log b+2)]-b/(\log b-4)+1$ $\ell$-additive subsequences of length $k$. This completes the proof of the lower bound.
\end{proof}

\section{Monotone 2-Additive Subsequences of Length Three}\label{Section:Monotone2-Add}

For all positive integers $k$ and $\ell$, let $g(k,\ell)$ be the smallest $n$ such that every $p\in S_n$ has a monotone $\ell$-additive subsequence of length $k$, if it exists. In this section, we first show that $g(3,2)=18$. Our proof is based on elementary combinatorial arguments. To ease the description, we divide the proof into four lemmas and then combine them together.

\begin{lemma}\label{Lemma:Monotone132}
Let $p\in S_{12}$. If the subpermutation of $p$ on $\{1,2,3\}$ is $(1,3,2)$, then $p$ has a monotone $2$-additive subsequence of length three.
\end{lemma}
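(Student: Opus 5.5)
The plan is a forced-placement case analysis. First I restate the target: a monotone subsequence $(x,y,z)$ is $2$-additive exactly when its largest term, which is an endpoint, equals the sum of the other two. So $p$ avoids monotone $2$-additive triples if and only if, for all $a<b$ with $a+b\le 12$, the number $c=a+b$ does not appear in the patterns "$a$, then $b$, then $c$" or "$c$, then $b$, then $a$". Equivalently, if $a$ precedes $b$ then $c$ must lie to the left of $b$, and if $b$ precedes $a$ then $c$ must lie to the right of $b$. Suppose, for contradiction, that $p\in S_{12}$ has $(1,3,2)$ as its subpermutation on $\{1,2,3\}$ and has no monotone $2$-additive triple. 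I will then place $4,5,6,\ldots$ one at a time, each time reading off the constraints from the small sums already positioned. This is the same style as the proof of Proposition~\ref{Prop:Extremal2Add}.

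The first steps are forced without branching. From $1+3=4$ with $1$ before $3$, the number $4$ must lie left of $3$. From $2+3=5$ with $3$ before $2$, the number $5$ must lie right of $3$. If $4$ were between $1$ and $3$, then $1$ precedes $4$, so $1+4=5$ would force $5$ left of $4$, hence left of $3$, which is a contradiction. Hence $4$ precedes $1$, giving the subpermutation $(4,1,3,2)$ with $5$ somewhere right of $3$.

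Next I split on whether $5$ lies between $3$ and $2$ or to the right of $2$. In each branch I place $6$ using $1+5$ and $2+4$, then $7$ using $2+5$ and $3+4$, then $8$ using $1+7$, $3+5$ and $2+6$, and so on up to $12$. At each stage I use several representations of the same sum to pin its position. When two representations give incompatible requirements (one forcing the number left of some element, the other forcing it right), that branch closes. When they are compatible, I record the resulting interval and continue.

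I expect the main obstacle to be the bookkeeping in the later stages, roughly for $8$ through $12$. There the allowed region for a new element can be a union of gaps rather than a single interval, so a few genuine subcases arise, such as the position of $6$ relative to $1$ and $2$. My first approach will be to always branch on the element with the fewest admissible gaps. I will use the largest numbers, $10$, $11$ and $12$, which have many representations like $1+11$, $2+10$, $5+7$ and $4+8$, as the final "squeeze" that kills each surviving branch. If a branch resists, I will look for a triple $(a,b,a+b)$ already fully placed in monotone order, as in the last step of Proposition~\ref{Prop:Extremal2Add}.
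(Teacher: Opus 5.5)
Your reformulation is correct: for $a<b$ with $a+b\le 12$, the element $a+b$ must lie on the same side of $b$ as $a$. This is exactly the inference rule the paper uses. Your forced opening ($4$ left of $3$, $5$ right of $3$, then $4$ before $1$ via $1+4=5$) and your split on whether $5$ precedes or follows $2$ reproduce the paper's first paragraph.

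The gap is that the proposal stops where the proof begins. This lemma \emph{is} a finite verification. Writing ``and so on up to $12$'' and adding a branching heuristic does not show that both branches actually die inside $\{1,\dots,12\}$. As written, nothing rules out a surviving placement.

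What you must supply is the closure of each branch.

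For $(4,1,3,5,2)$ it is short:
$2+4=6$ and $1+5=6$ put $6$ between $4$ and $5$;
$3+5=8$ and $2+6=8$ put $8$ between $6$ and $5$;
$4+6=10$ puts $10$ left of $6$.
So $(10,8,2)$ is decreasing with $8+2=10$.

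For $(4,1,3,2,5)$, your plan of placing elements in numerical order cannot close before $11$. Both $6$ and $7$ land somewhere between $4$ and $5$. The constraints coming from $\{1,\dots,10\}$ neither fix their order nor yield a contradiction: both $(4,7,8,9,10,1,3,2,6,5)$ and $(10,4,6,1,9,3,2,8,7,5)$ avoid monotone $2$-additive triples.

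The missing step is to bring in $11$ first. $4+7=11$ forces $11$ left of $7$, and $5+6=11$ forces $11$ right of $6$, so $6$ precedes $7$. Then:
$1+6=7$ puts $6$ left of $1$ (the paper gets this from $5+7=12$ and $1+11=12$, which place $11$ left of $1$);
$5+7=12$ puts $12$ right of $7$;
$2+6=8$ puts $8$ right of $6$;
$4+8=12$ puts $12$ left of $8$;
$4+6=10$ puts $10$ left of $6$;
$3+7=10$ then puts $3$ left of $7$.
Hence $1,7,8$ occur in increasing order with $1+7=8$, a contradiction.
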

\begin{proof}
    Let $p\in S_{12}$ be such that the subpermutation of $p$ on $\{1,2,3\}$ is $(1,3,2)$. Suppose, by way of contradiction, that $p$ does not have a monotone $2$-additive subsequence of length three. Since $1+3=4$, we have $4\in\mathcal{L}_p(3)$ because otherwise $(1,3,4)$ would be a monotone $2$-additive subsequence of length three. Similarly, since $3+2$, we have $5\in\mathcal{R}_p(3)$. Now we have $\{1,4\}\subseteq\mathcal{L}_p(5)$. Since $1+4=5$, we must have $4\in\mathcal{L}_p(1)$. To summarize, the subpermutation of $p$ on $\{1,2,3,4,5\}$ is either $(4,1,3,5,2)$ or $(4,1,3,2,5)$.

    {\bf Case 1}: The subpermutation of $p$ on $\{1,2,3,4,5\}$ is $(4,1,3,5,2)$. Since $2+5=7$, we have $7\in\mathcal{R}_p(5)$. Since $2+4=6$ and $1+5=6$, $6\in\mathcal{L}_p(5)\cap\mathcal{R}_p(4)$. Now we have $\{1,6\}\subseteq\mathcal{L}_p(7)$. Since $1+6=7$, we must have $6\in\mathcal{L}_p(1)$. Since $4+6=10$, we have $10\in\mathcal{L}_p(6)$. Since $3+5=8$ and $2+6=8$, we have $8\in\mathcal{L}_p(5)\cap\mathcal{R}_p(6)$. Now the subpermutation on $\{2,8,10\}$ is $(10,8,2)$ which is a contradiction.

    {\bf Case 2}: The subpermutation of $p$ on $\{1,2,3,4,5\}$ is $(4,1,3,2,5)$. Since $4+2=6$ and $1+5=6$, we have $6\in\mathcal{L}_p(5)\cap\mathcal{R}_p(4)$. Since $4+3=7$ and $2+5=7$, we have $7\in\mathcal{L}_p(5)\cap\mathcal{R}_p(4)$. Now the subpermutation of $p$ on $\{4,5,6,7\}$ is either $(4,6,7,5)$ or $(4,7,6,5)$. If it is $(4,7,6,5)$, then since $4+7=11$, we have $11\in\mathcal{L}_p(7)$ and hence $(11,6,5)$ is a subpermutation of $p$ which is a contradiction. Hence the subpermutation of $p$ on $\{4,5,6,7\}$ is $(4,6,7,5)$. Since $4+7=11$ and $6+5=11$, we have $11\in\mathcal{L}_p(7)\cap\mathcal{R}_p(6)$. Since $5+7=12$, we have $12\in\mathcal{R}_p(7)$. Since $1+11=12$ and $11\in\mathcal{L}_p(12)$, we must have $11\in\mathcal{L}_p(1)$. Since $2+6=8$, we have $8\in\mathcal{R}_p(6)$. Now $\{8,12\}\subseteq\mathcal{R}_p(4)$. Since $4+8=12$, we have $8\in\mathcal{R}_p(12)$. Since $4+6=10$, we have $10\in\mathcal{L}_p(6)$. Now we have $\{3,7\}\subseteq\mathcal{R}_p(10)$. Since $3+7=10$, we have $7\in\mathcal{R}_p(3)$. Now $(1,7,8)$ is a monotone $2$-additive subsequence of $p$ which is a contradiction.
\end{proof}

\begin{lemma}\label{Lemma:Monotone3412}
Let $p\in S_{16}$. If the subpermutation of $p$ on $\{1,2,3,4\}$ is $(3,4,1,2)$, then $p$ has a monotone $2$-additive subsequence of length three.
\end{lemma}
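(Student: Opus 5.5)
Throughout, suppose for contradiction that $p\in S_{16}$ has subpermutation $(3,4,1,2)$ on $\{1,2,3,4\}$ and no monotone $2$-additive subsequence of length three. I would use the same forcing method as in Lemma~\ref{Lemma:Monotone132}. The only tool is the following rule, which is a restatement of the definition via Lemma~\ref{Lemma:Fromltol-1}.

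\emph{Forcing rule.} Let $a<b$ and $c=a+b\le 16$.
\begin{itemize}
\item If $a$ is to the left of $b$, then $c\notin\mathcal{R}_p(b)$; otherwise $(a,b,c)$ would be increasing.
\item If $b$ is to the left of $a$, then $c\notin\mathcal{L}_p(b)$; otherwise $(c,b,a)$ would be decreasing.
\end{itemize}
Equivalently, if $a$ and $c$ both lie on the same side of $b$, then $a$ must be placed so that it breaks the monotone pattern, as in the step ``$1+4=5$ forces $4\in\mathcal{L}_p(1)$'' in Lemma~\ref{Lemma:Monotone132}. I would apply this rule repeatedly to pin down the relative order of $5,6,7,\ldots$, and branch only when the rule leaves a choice.

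\textbf{First forced placements.}
\begin{itemize}
\item From $3+2=5$ (with $3$ left of $2$) and $4+1=5$ (with $4$ left of $1$), we get $5\in\mathcal{R}_p(3)\cap\mathcal{R}_p(4)$.
\item From $4+2=6$, we get $6\in\mathcal{R}_p(4)$.
\item From $3+4=7$ (with $3$ left of $4$), we get $7\in\mathcal{L}_p(4)$.
\item Pairs involving $1,2$ with larger numbers then constrain things further. For example, $1+5=6$, $2+5=7$, $1+6=7$, $2+6=8$, $3+5=8$ and $4+4$ (not allowed) determine where $6$ and $7$ sit relative to $1$ and $5$.
\end{itemize}

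\textbf{Branching.} The natural first branch is the position of $5$ in the block after $4$: either $5\in\mathcal{L}_p(1)$, or $5$ lies between $1$ and $2$, or $5\in\mathcal{R}_p(2)$.
\begin{itemize}
\item $5\in\mathcal{R}_p(2)$: then $1+5=6$ and $2+5=7$ restrict $6$ and $7$. Since $7$ must already be left of $4$, the increasing pattern $(2,5,7)$ is excluded automatically, and I expect $6$ to become forced at once.
\item $5$ between $1$ and $2$: $(1,5)$ increasing forces $6\notin\mathcal{R}_p(5)$, and $(5,2)$ decreasing forces $7\notin\mathcal{L}_p(5)$. The second contradicts $7\in\mathcal{L}_p(4)$, so this case dies immediately.
\item $5\in\mathcal{L}_p(1)$: this case survives at first and needs a further split on the location of $6$ or $8$.
\end{itemize}

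\textbf{Propagation to larger numbers.} In each surviving branch I would keep propagating with sums landing in $\{8,\ldots,16\}$. The relevant sums are $3+7=10$, $4+7=11$, $5+7=12$, $7+8=15$, $7+9=16$ and so on. Because $7$ is forced far to the left (before $4$), sums involving $7$ and the small elements give strong constraints on $9,10,11,12$. The goal in each branch is to reach a triple that is forced to be monotone, exactly as $(10,8,2)$ and $(1,7,8)$ close the cases of Lemma~\ref{Lemma:Monotone132}.

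\textbf{Main obstacle.} The hard part is the branch $5\in\mathcal{L}_p(1)$. There the small elements give fewer immediate constraints, and one probably has to push up to elements near $16$ (hence the hypothesis $n=16$ rather than $12$) before a contradiction appears. My first attempt there would be to locate $8$, since it has two representations $3+5$ and $2+6$, and then $12=4+8=5+7$. Elements with several sum representations give the most forcing, and this is the same trick as using $6=2+4=1+5$ and $11=4+7=5+6$ in Lemma~\ref{Lemma:Monotone132}. If that stalls, I would branch on the relative order of $6$ and $7$ and iterate.
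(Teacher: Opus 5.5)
Your forcing rule (the two bullets) is correct, and your first placements $5\in\mathcal{R}_p(4)$, $6\in\mathcal{R}_p(4)$, $7\in\mathcal{L}_p(4)$ match the paper. The branching on $5$, however, is misanalysed. The argument you use to kill ``$5$ between $1$ and $2$'' applies verbatim to $5\in\mathcal{L}_p(1)$. In both cases $5$ lies left of $2$ while $7\in\mathcal{L}_p(4)\subseteq\mathcal{L}_p(5)$, so $(7,5,2)$ is decreasing with $5+2=7$. Hence $5\in\mathcal{R}_p(2)$ is forced immediately, and the branch you call the main obstacle is empty. Conversely, in the branch $5\in\mathcal{R}_p(2)$ the element $6$ is \emph{not} forced. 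From $4+2=6$ you get $6\in\mathcal{R}_p(4)$; from $1+5=6$ you get $6\in\mathcal{L}_p(5)$; and from $1+6=7$ with $7$ left of $1$ you get $6\in\mathcal{R}_p(1)$. That still leaves both $(3,4,1,6,2,5)$ and $(3,4,1,2,6,5)$, and all of the difficulty sits in these two orders.

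The genuine gap is that you never exhibit a forced monotone triple in either of them. For a finite forcing lemma like this, the explicit chain \emph{is} the proof, and ``I expect'' or ``the goal is'' cannot stand in for it. The paper closes $(3,4,1,6,2,5)$ as follows. It forces $8\in\mathcal{R}_p(6)\cap\mathcal{L}_p(5)$ (via $6+2$, $3+5$), $9\in\mathcal{L}_p(6)\cap\mathcal{R}_p(7)$ (via $3+6$, $7+2$), and $13\in\mathcal{R}_p(8)$ (via $8+5$). Hence $9\in\mathcal{L}_p(4)$ (via $4+9=13$). Then it forces $10\in\mathcal{R}_p(9)\cap\mathcal{L}_p(6)$ (via $9+1$, $4+6$) and $16\in\mathcal{L}_p(9)$ (via $7+9$), so $(16,10,6)$ is decreasing. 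The paper closes $(3,4,1,2,6,5)$ similarly. It forces $11\in\mathcal{R}_p(6)$ (via $6+5$) and $8\in\mathcal{R}_p(7)\cap\mathcal{L}_p(6)$ (via $7+1$, $6+2$), hence $8\in\mathcal{L}_p(3)$ (via $3+8=11$). Then it forces $9\in\mathcal{R}_p(8)\cap\mathcal{L}_p(6)$ (via $8+1$, $3+6$) and $15\in\mathcal{L}_p(8)$ (via $7+8$), so $(15,9,6)$ is decreasing. This is also where $n=16$ comes from. Finally, your ``equivalently'' sentence is garbled. The rule says $b$ may not lie between $a$ and $c$, so something is forced when $a$ and $b$ lie on the same side of $c$ (as in $1+4=5$). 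When $a$ and $c$ lie on the same side of $b$, nothing is forced.
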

\begin{proof}
    Let $p\in S_{16}$ be such that the subpermutation of $p$ on $\{1,2,3,4\}$ is $(3,4,1,2)$. Suppose, by way of contradiction, that $p$ does not have a monotone $2$-additive subsequence of length three. Since $3+4=7$, we have $7\in\mathcal{L}_p(4)$. Since $4+1=5$, we have $5\in\mathcal{R}_p(4)$. Now $\{2,5\}\subseteq\mathcal{R}_p(7)$. Since $5+2=7$, we have $5\in\mathcal{R}_p(2)$. Notice that now the subpermutation of $p$ on $\{1,2,3,4,5\}$ is $(3,4,1,2,5)$ and we also have $7\in\mathcal{L}_p(4)$. Since $4+2=6$ and $5+1=6$, we have $6\in\mathcal{L}_p(5)\cap\mathcal{R}_p(4)$. Since $1+6=7$ and $\{1,6\}\subseteq\mathcal{R}_p(7)$, we have $6\in\mathcal{R}_p(1)$. Hence, the subpermutation of $p$ on $\{1,2,3,4,5,6\}$ is either $(3,4,1,6,2,5)$ or $(3,4,1,2,6,5)$.

    {\bf Case 1}: The subpermutation of $p$ on $\{1,2,3,4,5,6\}$ is $(3,4,1,6,2,5)$. Since $6+2=8$ and $3+5=8$, we have $8\in\mathcal{L}_p(5)\cap\mathcal{R}_p(6)$. Since $3+6=9$ and $7+2=9$, we have $9\in\mathcal{L}_p(6)\cap\mathcal{R}_p(7)$. Since $8+5=13$, we have $13\in\mathcal{R}_p(8)$. Now since $\{4,9\}\subseteq\mathcal{L}_p(13)$ and $4+9=13$, we have $9\in\mathcal{L}_p(4)$. Notice that the subpermutation of $p$ on $\{1,4,6,7,9\}$ is now $(7,9,4,1,6)$. Since $4+6=10$ and $9+1=10$, we have $10\in\mathcal{L}_p(6)\cap\mathcal{R}_p(9)$. Since $7+9=16$, we have $16\in\mathcal{L}_p(9)$. Now $(16,10,6)$ is a monotone $2$-additive subsequence of $p$ which is a contradiction.

    {\bf Case 2}: The subpermutation of $p$ on $\{1,2,3,4,5,6\}$ is $(3,4,1,2,6,5)$. Since $6+5=11$, we have $11\in\mathcal{R}_p(6)$. Since $7+1=8$ and $6+2=8$, we have $8\in\mathcal{L}_p(6)\cap\mathcal{R}_p(7)$. Now $\{3,8\}\subseteq\mathcal{L}_p(11)$. Since $3+8=11$, we have $8\in\mathcal{L}_p(3)$. Notice that now the subpermutation of $p$ on $\{1,2,3,4,5,6,7,8\}$ is $(7,8,3,4,1,2,6,5)$. Since $8+1=9$ and $3+6=9$, we have $9\in\mathcal{L}_p(6)\cap\mathcal{R}_p(8)$. Since $7+8=15$, we have $15\in\mathcal{L}_p(8)$. Now $(15,9,6)$ is a monotone $2$-additive subsequence of $p$ which is a contradiction.
\end{proof}

\begin{lemma}\label{Lemma:Monotone3142}
Let $p\in S_{18}$. If the subpermutation of $p$ on $\{1,2,3,4\}$ is $(3,1,4,2)$, then $p$ has a monotone $2$-additive subsequence of length three.
\end{lemma}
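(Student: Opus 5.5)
We argue by contradiction, in the same forced-placement style as Lemmas~\ref{Lemma:Monotone132} and \ref{Lemma:Monotone3412}. Suppose $p\in S_{18}$ has subpermutation $(3,1,4,2)$ on $\{1,2,3,4\}$ and no monotone $2$-additive subsequence of length three. The basic rule is: whenever $x+y=z$ and $x,y$ appear in a given relative order, the position of $z$ is restricted. If $x$ precedes $y$ with $x<y<z$, then $z$ cannot lie to the right of $y$, since $(x,y,z)$ would be increasing. Dually, $z$ cannot lie to the left of $x$ when $(z,y,x)$ is decreasing with $y$ to the right of $x$. A sum with two representations usually pins $z$ into an intersection such as $\mathcal{L}_p(\cdot)\cap\mathcal{R}_p(\cdot)$.

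The first forced steps are as follows.
\begin{itemize}
\item From $1+4=5$ (with $1$ before $4$) we get $5\in\mathcal{L}_p(4)$.
\item From $3+1=4$ nothing new follows, since the order is already $(3,1,4)$ and is not monotone.
\item From $4+2=6$ we get that $6$ is not left of $4$ in a way that creates a decreasing $(6,4,2)$, so $6\in\mathcal{R}_p(4)$.
\item From $1+5=6$ and $3+2=5$ we locate $5$ relative to $3$ and $2$: $(3,\dots,2)$ with $5$ left of $3$ would give a decreasing $(5,3,2)$, so $5\in\mathcal{R}_p(3)$.
\item Combining with $1+4=5$ and $5\in\mathcal{L}_p(4)$, the options for $5$ are between $3$ and $1$, or between $1$ and $4$. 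If $5$ lies between $1$ and $4$, then $(1,5,6)$ or a similar triple is checked against $6\in\mathcal{R}_p(4)$, giving $(1,5,6)$ increasing with $1+5=6$, a contradiction.
\item Hence the subpermutation on $\{1,\dots,5\}$ is $(3,5,1,4,2)$, and $6\in\mathcal{R}_p(4)$.
\end{itemize}

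Next we place $6$ relative to $2$ using $2+4=6$, and then place $7,8,9,\dots$ using the multiple representations $7=3+4=5+2=6+1$, $8=3+5=2+6$, $9=4+5=3+6$, and so on. This splits the argument into at most two or three cases, as in Lemma~\ref{Lemma:Monotone3412}. In each case we push upward through the sums, where the larger elements up to $18$ are needed, for instance $18=9+9$ is unavailable but $18=8+10=7+11$. The goal in each case is to force two elements $x<y$ into an order in which their sum, already pinned by another representation, completes a monotone triple. A useful fallback at any branch is to reduce to an earlier lemma. If the forced positions of small elements produce a pattern like $(a,c,b)$ with $c=a+b$, we can apply the same reasoning as Lemma~\ref{Lemma:Monotone132} with a scaled copy. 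Scaling is legitimate because $\{2,4,6,\dots\}$ reproduces the additive structure, so a subpermutation $(2,6,4)$ on $\{2,4,6\}$ forces a monotone triple inside $\{2,4,\dots,24\}$. That set is too large, though, so this fallback should be used only where the bounds fit.

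The main obstacle is the case branching after $\{1,\dots,6\}$ is fixed. We must choose the order in which to place $7,8,9,10,\dots$ so that each new element is pinned by two representations, and so that the contradiction is reached before exceeding $18$. The fact that this lemma, unlike the others, needs $18$ suggests the longest chain lives here. I would first try to pin $7$ via $3+4$ and $5+2$, then $9$ via $4+5$ and $3+6$, and aim for a final contradiction of the form $(x,y,x+y)$ or its reverse with $x+y\in\{15,16,17,18\}$, mirroring the endings $(16,10,6)$ and $(15,9,6)$ of Lemma~\ref{Lemma:Monotone3412}.
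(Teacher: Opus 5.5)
Your derivation of the order $(3,5,1,4,2)$ on $\{1,\dots,5\}$, with $6\in\mathcal{R}_p(4)$, is correct and matches the paper. However, that is where the actual argument stops. Everything after it is a plan rather than a proof, and no contradiction is ever derived. Since this lemma is a finite forced-placement argument, its whole content is the specific chain of deductions, so this is a genuine gap.

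Your proposed next step also does not work. The relation $2+4=6$ was already used to get $6\in\mathcal{R}_p(4)$, and it cannot decide whether $6$ precedes or follows $2$; both positions are consistent with it. Pinning $9$ via $4+5$ and $3+6$ only yields $9\in\mathcal{R}_p(5)\cap\mathcal{L}_p(6)$, which plays no role in the contradiction. The scaling fallback is, as you note yourself, out of range.

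The missing idea is to bring in $8$ early:
\begin{itemize}
\item $3+5=8$ forces $8\in\mathcal{L}_p(5)$, so $2,6\in\mathcal{R}_p(8)$. Then $2+6=8$ forces $2$ before $6$, fixing $(3,5,1,4,2,6)$.
\item Your idea for $7$ is right: $3+4=7=5+2$ puts $7$ between $5$ and $4$. Then $1+7=8$, with $1,7\in\mathcal{R}_p(8)$, puts $7$ after $1$.
\item $3+7=10=8+2$ gives $10\in\mathcal{L}_p(7)\cap\mathcal{R}_p(8)$.
\item $7+6=13$ gives $13\in\mathcal{R}_p(7)$. Hence $3,10\in\mathcal{L}_p(13)$, and $3+10=13$ forces $10\in\mathcal{L}_p(3)$.
\item $5+7=12=10+2$ gives $12\in\mathcal{L}_p(7)\cap\mathcal{R}_p(10)$. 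So the subpermutation on $\{6,8,10,12\}$ is $(8,10,12,6)$.
\item Finally, $12+6=18$ forces $18\in\mathcal{R}_p(12)$, and then $(8,10,18)$ is an increasing $2$-additive triple.
\end{itemize}
From $(3,5,1,4,2,6)$ onward, this is the paper's chain, and it needs no case split at all. Your expectation of two or three branches ending like $(16,10,6)$ is therefore off. The right choice of sums ($8,7,10,13,12,18$) makes the argument linear, and $n=18$ is needed exactly because the final sum is $8+10=12+6=18$.
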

\begin{proof}
    Let $p\in S_{18}$ be such that the subpermutation of $p$ on $\{1,2,3,4\}$ is $(3,1,4,2)$. Suppose, by way of contradiction, that $p$ does not have a monotone $2$-additive subsequence of length three. Since $2+4=6$, $6\in\mathcal{R}_p(4)$. Since $1+4=5$ and $3+2=5$, we have $5\in\mathcal{L}_p(4)\cap\mathcal{R}_p(3)$. Now we have $\{1,5\}\subseteq\mathcal{L}_p(6)$. Since $1+5=6$, we must have $5\in\mathcal{L}_p(1)$. Since $3+5=8$, we have $8\in\mathcal{L}_p(5)$. Now $\{2,6\}\subseteq\mathcal{R}_p(8)$. Since $2+6=8$, we must have $6\in\mathcal{R}_p(2)$. So the subpermutation of $p$ on $\{1,2,3,4,5,6\}$ is $(3,5,1,4,2,6)$.

    Since $3+4=7$ and $5+2=7$, we have $7\in\mathcal{L}_p(4)\cap\mathcal{R}_p(5)$. Now $\{1,7\}\subseteq\mathcal{R}_p(8)$. Since $1+7=8$, we have $7\in\mathcal{R}_p(1)$. Since $4+7=11$, we have $11\in\mathcal{R}_p(7)$. Since $3+7=10$ and $8+2=10$, we have $10\in\mathcal{L}_p(7)\cap\mathcal{R}_p(8)$. Since $7+6=13$, we have $13\in\mathcal{R}_p(7)$. Now we have $\{3,10\}\subseteq\mathcal{L}_p(13)$. Since $3+10=13$, we have $10\in\mathcal{L}_p(3)$. Since $5+7=12$ and $10+2=12$, we have $12\in\mathcal{L}_p(7)\cap\mathcal{R}_p(10)$. Now the subpermutation of $p$ on $\{6,8,10,12\}$ is $(8,10,12,6)$. Since $12+6=18$, we must have $18\in\mathcal{R}_p(12)$. Then $(8,10,18)$ is a monotone $2$-additive subsequence of $p$ which is a contradiction. 
\end{proof}

\begin{lemma}\label{Lemma:Monotone3124}
Let $p\in S_{14}$. If the subpermutation of $p$ on $\{1,2,3,4\}$ is $(3,1,2,4)$, then $p$ has a monotone $2$-additive subsequence of length three.
\end{lemma}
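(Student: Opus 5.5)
We argue by contradiction, as in Lemmas~\ref{Lemma:Monotone132}, \ref{Lemma:Monotone3412} and \ref{Lemma:Monotone3142}. Suppose $p\in S_{14}$ has $(3,1,2,4)$ as its subpermutation on $\{1,2,3,4\}$ and has no monotone $2$-additive subsequence of length three. By Lemma~\ref{Lemma:Fromltol-1}, such a subsequence is a monotone triple whose two smaller entries sum to the largest. This gives one forcing rule, which we apply repeatedly. Take $a<b$ with $a+b\le 14$. If $a\in\mathcal{L}_p(b)$, then $a+b\in\mathcal{L}_p(b)$, since otherwise $(a,b,a+b)$ is increasing. If $a\in\mathcal{R}_p(b)$, then $a+b\in\mathcal{R}_p(b)$, since otherwise $(a+b,b,a)$ is decreasing. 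A third deduction is also used: if $a,b\in\mathcal{L}_p(a+b)$ or $a,b\in\mathcal{R}_p(a+b)$, the order of $a$ and $b$ is forced.

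The first step places $5,6,7$. From $1+4=5$ (with $1\in\mathcal{L}_p(4)$) we get $5\in\mathcal{L}_p(4)$. From $3+2=5$ (with $2\in\mathcal{R}_p(3)$) we get $5\in\mathcal{R}_p(3)$. So $5$ lies strictly between $3$ and $4$. Similarly $2+4=6$ gives $6\in\mathcal{L}_p(4)$, and $3+4=7$ gives $7\in\mathcal{L}_p(4)$. Next we use the third deduction to pin down $5$ relative to $1$ and $2$. For instance, if $5$ lies to the right of $2$, then $\{1,5\}\subseteq\mathcal{L}_p(6)$ or not according to where $6$ sits, and $1+5=6$, $2+4=6$, $3+2=5$ constrain this. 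From here I would split into the (at most three) cases $(3,5,1,2,4)$, $(3,1,5,2,4)$ and $(3,1,2,5,4)$ for the subpermutation on $\{1,\dots,5\}$. Each case would then be refined by inserting $6$ and $7$ using the pair sums $1+5$, $2+4$, $1+6$, $2+5$, $3+4$, $2+6$, and so on.

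In each case the aim is to keep adding the next integer ($8,9,\dots$), using two different pairs that sum to it. Each pair pins the new entry to one side of one of its summands, so the two pairs together usually confine it to an interval. We continue until some entry $c\le 14$ is forced both left and right of some term, or until an explicit monotone triple appears. The largest sums needed should be about $13$ or $14$, for example $6+8$ or $7+7$-type pairs adjusted to distinct values such as $5+9$ and $6+8$. The earlier lemmas suggest that a contradiction appears once the doubled pairs produce an element like $14=6+8=5+9$ that must sit on both sides of something.

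The main obstacle is the branching: some insertions, such as the position of $6$ relative to $1$ and $2$, are not determined by one rule. When this happens I would first try a symmetric pair (two representations of the same sum) to force the position. Only if that fails would I split into two subcases, as in Case~2 of Lemma~\ref{Lemma:Monotone132}. Keeping every subcase within $\{1,\dots,14\}$ is the delicate part; I expect one subcase to need the largest elements $13$ and $14$.
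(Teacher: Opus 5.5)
Your setup is correct and matches the paper's. This includes the forcing rule (for $a<b$, the elements $a$ and $a+b$ must lie on the same side of $b$), the consequence that fixes the order of $a,b$ when both lie on one side of $a+b$, and the placement of $5$ strictly between $3$ and $4$. It also includes the three-way split into $(3,5,1,2,4)$, $(3,1,5,2,4)$, $(3,1,2,5,4)$.

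But the proposal stops exactly where the proof begins. None of the three cases is carried to a contradiction, and the claim that iterating the rule closes every branch within $\{1,\dots,14\}$ is only expected, not shown. That claim is the entire content of the lemma and cannot be taken on faith. The companion pattern $(3,1,4,2)$ cannot be closed inside $\{1,\dots,17\}$ at all: the extremal permutation in Theorem~\ref{Theorem:Monotone2Additive} has exactly that pattern on $\{1,2,3,4\}$. So whether the forcing terminates within a given range depends on the case and must be checked. Your sentence about using the third deduction to ``pin down $5$ relative to $1$ and $2$'' also achieves nothing; the position of $5$ is not forced, which is why the three cases are needed.

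What is missing is an explicit forcing chain in each case, such as the paper's.

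For $(3,5,1,2,4)$: $3+5=8$ puts $8$ left of $5$, and $1+5=6$ puts $6$ right of $5$, so $2+6=8$ forces $6$ right of $2$. Then $2+5=7=1+6$ puts $7$ between $5$ and $6$, and $4+6=10$ puts $10$ right of $6$. This gives the increasing triple $(3,7,10)$.

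For $(3,1,5,2,4)$: $1+5=6$, $4+5=9$ and $3+6=9$ force $6$ left of $3$, and $2+6=8=3+5$ puts $8$ between $6$ and $5$. Next, $6+8=14$ and $2+8=10$ put $14$ left of $8$ and $10$ right of $8$, so $4+10=14$ forces $10$ right of $4$. With $7$ between $5$ and $4$ (from $2+5=7=3+4$), you again get $(3,7,10)$.

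For $(3,1,2,5,4)$: as before, $6$ is left of $3$, and $7,8$ lie between $6$ and $5$. Then $6+7=13$ and $5+8=13$ put $13$ left of $7$ and right of $8$. Also $1+8=9$ forces $8$ left of $1$, $6+8=14$ puts $14$ left of $8$, and $1+13=14$ forces $13$ (hence $7$) right of $1$. Finally, $2+8=10=3+7$ puts $10$ between $8$ and $7$, giving the decreasing triple $(14,10,4)$.

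Without such a verification, your text is a plan, not a proof.
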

\begin{proof}
     Let $p\in S_{14}$ be such that the subpermutation of $p$ on $\{1,2,3,4\}$ is $(3,1,2,4)$. Suppose, by way of contradiction, that $p$ does not have a monotone $2$-additive subsequence of length three. Since $3+2=5$ and $1+4=5$, we have $5\in\mathcal{L}_p(4)\cap\mathcal{R}_p(3)$. So the subpermutation of $p$ on $\{1,2,3,4,5\}$ is $(3,5,1,2,4)$, $(3,1,5,2,4)$, or $(3,1,2,5,4)$.

     {\bf Case 1}: The subpermutation of $p$ on $\{1,2,3,4,5\}$ is $(3,5,1,2,4)$. Since $3+5=8$, we have $8\in\mathcal{L}_p(5)$. Since $5+1=6$, we have $6\in\mathcal{R}_p(5)$. Now we have $\{2,6\}\subseteq\mathcal{R}_p(8)$. Since $2+6=8$, we have $6\in\mathcal{R}_p(2)$. Since $2+4=6$, we have $6\in\mathcal{L}_p(4)$. Since $5+2=7$ and $1+6=7$, we have $7\in\mathcal{L}_p(6)\cap\mathcal{R}_p(5)$. Since $6+4=10$, we have $10\in\mathcal{R}_p(6)$. Now $(3,7,10)$ is a monotone $2$-additive subsequence of $p$ which is a contradiction.

     {\bf Case 2}: The subpermutation of $p$ on $\{1,2,3,4,5\}$ is $(3,1,5,2,4)$. Since $5+2=7$ and $4+3=7$, we have $7\in\mathcal{L}_p(4)\cap\mathcal{R}_p(5)$. Since $1+5=6$, we have $6\in\mathcal{L}_p(5)$. Since $5+4=9$, we have $9\in\mathcal{R}_p(5)$. Notice that now $\{3,6\}\subseteq\mathcal{L}_p(9)$. Since $3+6=9$, we must have $6\in\mathcal{L}_p(3)$. Now the subpermutation of $p$ on $\{1,2,3,4,5,6\}$ is $(6,3,1,5,2,4)$. Since $6+2=8$ and $5+3=8$, we have $8\in\mathcal{L}_p(5)\cap\mathcal{R}_p(6)$. Since $6+8=14$, we have $14\in\mathcal{L}_p(8)$. Since $8+2=10$, $10\in\mathcal{R}_p(8)$. Now we have $\{4,10\}\subseteq\mathcal{R}_p(14)$. Since $4+10=14$, we have $10\in\mathcal{R}_p(4)$. Now $(3,7,10)$ is a monotone $2$-additive subsequence of $p$ which is a contradiction.

     {\bf Case 3}: The subpermutation of $p$ on $\{1,2,3,4,5\}$ is $(3,1,2,5,4)$. Since $5+4=9$, we have $9\in\mathcal{R}_p(5)$. Since $1+5=6$, $6\in\mathcal{L}_p(5)$. Now $\{3,6\}\subseteq\mathcal{L}_p(9)$. Since $3+6=9$, we have $6\in\mathcal{L}_p(3)$. Notice that now the subpermutation of $p$ on $\{1,2,3,4,5,6\}$ is $(6,3,1,2,5,4)$. Since $6+2=8$ and $3+5=8$,we have $8\in\mathcal{L}_p(5)\cap\mathcal{R}_p(6)$. Since $6+1=7$ and $2+5=7$, we have $7\in\mathcal{L}_p(5)\cap\mathcal{R}_p(6)$. Since $6+7=13$, we have $13\in\mathcal{L}_p(7)$. Now $\{13,8\}\subseteq\mathcal{L}_p(5)$. Since $5+8=13$, we must have $13\in\mathcal{R}_p(8)$. Similarly, we have $\{1,8\}\subseteq\mathcal{L}_p(9)$ and since $1+8=9$, we have $8\in\mathcal{L}_p(1)$. Since $6+8=14$, we have $14\in\mathcal{L}_p(8)$. Now $\{1,13\}\subseteq\mathcal{R}_p(14)$. Since $1+13=14$, we have $13\in\mathcal{R}_p(1)$. Since $8+2=10$ and $7+3=10$, we have $10\in\mathcal{L}_p(7)\cap\mathcal{R}_p(8)$. Now $(14,10,4)$ is a monotone $2$-additive subsequence of $p$ which is a contradiction.
\end{proof}

\begin{theorem}\label{Theorem:Monotone2Additive}
    $18$ is the smallest $n$ such that every $p\in S_n$ has a monotone $2$-additive subsequence of length three; that is, $g(3,2)=18$.
\end{theorem}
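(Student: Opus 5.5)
The argument has two halves: a case split on the pattern of $\{1,2,3,4\}$ for the upper bound, and an explicit permutation in $S_{17}$ for the lower bound.

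\textbf{Upper bound.} By Lemma~\ref{Lemma:FromAll-nToSome-n} (which also holds for monotone subsequences), it suffices to show that every $p\in S_{18}$ has a monotone $2$-additive subsequence of length three. Both the property and the four lemmas are invariant under reversal of the permutation. A sequence $(x,y,z)$ is monotone and $2$-additive if and only if its reverse $(z,y,x)$ is, since the condition is symmetric in the first and last terms. So we may assume that $1$ precedes $2$ in $p$, and then split according to the subpermutation of $p$ on $\{1,2,3\}$ and on $\{1,2,3,4\}$.

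With $1$ before $2$, the pattern on $\{1,2,3\}$ is $(1,2,3)$, $(1,3,2)$, or $(3,1,2)$. The pattern $(1,2,3)$ is itself monotone and $2$-additive, since $1+2=3$, so we are done. The pattern $(1,3,2)$ is handled by Lemma~\ref{Lemma:Monotone132}, after restricting to the first twelve values: the subpermutation on $\{1,\dots,12\}$ is a permutation in $S_{12}$ to which the lemma applies.

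It remains to treat $(3,1,2)$, where we insert $4$. The options are $(4,3,1,2)$, $(3,4,1,2)$, $(3,1,4,2)$, and $(3,1,2,4)$.
\begin{itemize}
\item $(4,3,1,2)$ contains $(4,3,1)$, which is decreasing with $3+1=4$, so we are done.
\item $(3,4,1,2)$ is Lemma~\ref{Lemma:Monotone3412}, applied to the subpermutation on $\{1,\dots,16\}$.
\item $(3,1,4,2)$ is Lemma~\ref{Lemma:Monotone3142}.
\item $(3,1,2,4)$ is Lemma~\ref{Lemma:Monotone3124}, applied on $\{1,\dots,14\}$.
\end{itemize}
Applying the smaller lemmas on an initial segment is legitimate because values are unchanged under restriction: a monotone $2$-additive subsequence of the restriction is also one of $p$.

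\textbf{Lower bound.} We must exhibit some $q\in S_{17}$ with no monotone $2$-additive subsequence of length three. This is the main obstacle, since such a $q$ has to be found. I would search for it by following the forcing chains in the lemmas above and pushing the branch that survives longest. Lemma~\ref{Lemma:Monotone3142} is the only one that needs value $18$, so the extremal permutation should begin with the pattern $(3,5,1,4,2,6)$ on $\{1,\dots,6\}$. Its forced placements of $7,8,10,11,12,13$ then pin down most of $q$, and we fill in the remaining values $9,14,15,16,17$ consistently; if necessary this step can be a computer search.

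Verifying the candidate is a finite check. By Lemma~\ref{Lemma:Fromltol-1}, we must confirm that for each value $s$, no two elements of $\mathcal{L}_q(s)$ that are both smaller than $s$ sum to $s$ while lying in increasing order before $s$. We must likewise confirm the symmetric condition for decreasing triples ending at $s$, and its analogues for the first term.

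Combining the two bounds gives $g(3,2)=18$.
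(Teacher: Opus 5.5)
Your upper bound is the paper's argument. You reduce by reversal to $1$ preceding $2$, dispose of the patterns $(1,2,3)$ and $(4,3,1,2)$ directly, and apply Lemmas~\ref{Lemma:Monotone132}, \ref{Lemma:Monotone3412}, \ref{Lemma:Monotone3142} and \ref{Lemma:Monotone3124} to initial segments. (Only the property needs to be reversal-invariant, not the lemmas, whose hypotheses are not. This is harmless.)

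\textbf{The gap is the lower bound, which you describe but do not prove.} The inequality $g(3,2)\geq 18$ is an existence statement. It is established only by producing a specific $q\in S_{17}$ and checking it. ``Search for it, possibly by computer'' is not a proof, and nothing in your argument shows the search succeeds.

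Your forcing analysis is correct as far as it goes. By the three lemmas needing at most $16$ values, any such $q$ with $1$ before $2$ must have pattern $(3,1,4,2)$ on $\{1,2,3,4\}$. The proof of Lemma~\ref{Lemma:Monotone3142} then forces $(3,5,1,4,2,6)$ on $\{1,\dots,6\}$ and fixes the relative placements of $7,8,10,11,12,13$. But these are only necessary conditions. They do not show that $9,14,15,16,17$ can be inserted without creating a monotone $2$-additive triple, and that is exactly the content of the lower bound.

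The paper closes this with the explicit witness
\[
(8,10,12,14,3,5,16,1,7,11,4,17,13,2,9,6,15)\in S_{17}.
\]
It does have the pattern you predicted on $\{1,\dots,6\}$, and it restricts to $(8,10,12,6)$ on $\{6,8,10,12\}$, just before Lemma~\ref{Lemma:Monotone3142} would need the value $18$.

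Your stated verification criterion is also slightly off. A decreasing monotone $2$-additive triple has the sum as its \emph{first} term, not its last. The clean check is this: for every pair $a<b$ with $a+b\leq 17$, the value $b$ must not lie, in position, between $a$ and $a+b$. This is a finite check over $64$ pairs, and the paper's witness passes it.
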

\begin{proof}
$(8,10,12,14,3,5,16,1,7,11,4,17,13,2,9,6,15)\in S_{17}$ does not have monotone 2-additive subsequences of length 3. Hence $g(3,2)\geq18$.

To show that $g(3,2)=18$, it suffices to show that every $p\in S_{18}$ has a monotone $2$-additive subsequence of length three. Suppose, by way of contradiction, that $p\in S_{18}$ does not have a monotone $2$-additive subsequence of length three. WLOG, we assume that the subpermutation of $p$ on $\{1,2\}$ is $(1,2)$. Since $1+2=3$, the subpermutation of $p$ on $\{1,2,3\}$ is either $(3,1,2)$ or $(1,3,2)$. By Lemma~\ref{Lemma:Monotone132}, the subpermutation of $p$ on $\{1,2,3\}$ is $(3,1,2)$. Since $3+1=4$, we have $4\in\mathcal{R}_p(3)$. Hence, the subpermutation of $p$ on $\{1,2,3,4\}$ is $(3,4,1,2)$, $(3,1,4,2)$, or $(3,1,2,4)$. By Lemmas~\ref{Lemma:Monotone3412}, \ref{Lemma:Monotone3142}, and  \ref{Lemma:Monotone3124}, none of them is possible. Hence, we have a contradiction. Therefore, every $p\in S_{18}$ has a monotone $2$-additive subsequence of length three.
\end{proof}

Next, we look at the number of monotone 2-additive subsequences of length three contained in permutations. For positive integers $k$, $\ell$, $n$ and $p\in S_n$, let $G_p(k,\ell,n)$ be the number of monotone $\ell$-additive subsequences of length $k$ in $p$ and let 
\[
G(k,\ell,n)=\min_{p\in S_n}G_p(k,\ell,n).
\]
\begin{theorem}
For all large enough $n$, we have
    \[
    \frac{n}{18(\log n-\log18+2)}-6\leq G(3,2,n)\leq\frac{1}{18}n^2+\frac{7}{6}n.
    \]
\end{theorem}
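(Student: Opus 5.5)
The lower bound will follow the prime-scaling argument from the proof of Theorem~\ref{Theorem:NumberOfAdditiveGeneral}, with Theorem~\ref{Theorem:Monotone2Additive} supplying $g(3,2)=18$. The upper bound will come from an explicit residue-class construction modulo $3$.

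\textbf{Lower bound.} Fix $p\in S_n$. For every prime $a>18$ with $18a\le n$, the subpermutation of $p$ on $\{a,2a,\ldots,18a\}$ is order-isomorphic to a permutation of $\{1,\ldots,18\}$, because $x\mapsto ax$ is increasing. The condition $x_1+x_2=x_3$ or $x_2+x_3=x_1$ is also invariant under this scaling. So by Theorem~\ref{Theorem:Monotone2Additive} each such block contains a monotone $2$-additive triple. These triples are pairwise distinct, since every term of the triple from block $a$ has $a$ as its unique prime factor exceeding $18$. One further triple comes from the subpermutation on $\{1,\ldots,18\}$.

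To count the admissible primes, I will use Rosser's bound $\pi(x)\ge x/(\log x+2)$ with $x=n/18$ and subtract the $7$ primes at most $18$. This gives at least
\[
\frac{n}{18(\log n-\log 18+2)}-7+1
\]
triples, which is the stated bound. The only change from the general theorem is replacing the estimate $b/(\log b-4)$ by the exact value $\pi(18)=7$; this matters because $\log 18<4$.

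\textbf{Upper bound.} I will first classify the monotone $2$-additive triples. Increasing triples $x_1<x_2<x_3$ can only satisfy $x_1+x_2=x_3$. Decreasing triples can only satisfy $x_2+x_3=x_1$. So each such triple is a Schur triple $\{a<b<c\}$ with $a+b=c$, arranged in $p$ either as $a,b,c$ or as $c,b,a$.

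Let $B_r$ be the numbers in $\{1,\ldots,n\}$ congruent to $r$ modulo $3$. I will take $p$ to be $B_1$ listed in decreasing order, then $B_0$ in increasing order, then $B_2$ in increasing order. Next I will check each residue pattern of a Schur triple $a+b=c$.
\begin{itemize}
\item Pattern $(1,1,2)$: $a,b\in B_1$ and $c\in B_2$. The positions read $b,a,c$, which is not monotone.
\item Pattern $(2,2,1)$: $a,b\in B_2$ and $c\in B_1$. The positions read $c,a,b$, with $c>a<b$, so not monotone.
\item Pattern $\{1,2\}\to 0$: one summand lies in $B_1$, $c\in B_0$, and the other summand lies in $B_2$. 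The middle position holds the largest value $c$, so the triple is not monotone.
\end{itemize}
Hence the only possible monotone triples lie entirely inside $B_0$, i.e.\ $3a'+3b'=3c'$ with $a'<b'$ and $c'\le m:=\lfloor n/3\rfloor$. Their number is at most $\sum_{a'\ge1}\#\{b':a'<b'\le m-a'\}\le m^2/4$. This is at most $n^2/36$, which lies well within $n^2/18+7n/6$.

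The main step is finding this block arrangement, so that every cross-residue Schur triple is broken. The remaining verification is routine. I expect no real obstacle beyond checking that the in-block count fits within the stated constant, and it does with room to spare.
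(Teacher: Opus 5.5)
Your lower bound is the paper's argument: prime-scaled copies $\{a,2a,\ldots,18a\}$, Rosser's estimate, subtracting $\pi(18)=7$, and adding one triple from $\{1,\ldots,18\}$. It is fine.

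The upper bound has a genuine gap, because your residue case analysis is incomplete. Besides $(1,1)\to 2$, $(2,2)\to 1$, $\{1,2\}\to 0$ and the triples inside $B_0$, there are the patterns $\{0,1\}\to 1$ and $\{0,2\}\to 2$. In these, one summand is a multiple of $3$, and the other summand and the sum share a nonzero residue. In your arrangement such a triple is monotone whenever the multiple of $3$ is the smaller summand.
\begin{itemize}
\item $7,4\in B_1$ appear in the order $7,4$ because $B_1$ is decreasing, and both precede $3\in B_0$. So $(7,4,3)$ is decreasing with $4+3=7$.
\item $3\in B_0$ precedes $5,8\in B_2$. So $(3,5,8)$ is increasing with $3+5=8$.
\end{itemize}
In general, every pair $x<y$ with $3\mid x$, $y\not\equiv 0\pmod 3$ and $x+y\le n$ gives a monotone $2$-additive triple: $(x+y,y,x)$ if $y\equiv 1$, and $(x,y,x+y)$ if $y\equiv 2$. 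Each of these two families has $\sim n^2/36$ members. Adding the $\sim n^2/36$ triples inside $B_0$, your permutation has $\sim n^2/12$ monotone $2$-additive triples. This exceeds $n^2/18+7n/6$ for large $n$. Note also that your claimed $n^2/36$ would have halved the constant in the statement you were proving, which should have been a warning sign.

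Permuting or flipping the three residue blocks does not obviously fix this. For example, making $B_1$ increasing kills the $\{0,1\}\to 1$ family but revives $(1,1)\to 2$ as increasing triples $a,b,c$.

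The paper instead splits by value rather than residue. It uses $(\lceil n/3\rceil,\ldots,1,n,n-1,\ldots,\lceil n/3\rceil+1)$, a union of two decreasing blocks with every entry of the first block smaller than every entry of the second.
\begin{itemize}
\item There is no increasing subsequence of length three, since such a subsequence takes at most one term from each decreasing block.
\item No decreasing triple uses both blocks, since a later entry from the second block is always larger than an earlier entry from the first.
\end{itemize}
So the monotone $2$-additive triples are exactly the Schur triples inside $\{1,\ldots,\lceil n/3\rceil\}$, about $n^2/36$ of them, and those inside $\{\lceil n/3\rceil+1,\ldots,n\}$. The latter satisfy $n/3<a<b$ and $a+b\le n$, a triangle of area $n^2/36$. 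The total is $n^2/18+O(n)$.
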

\begin{proof}
The proof of the lower bound is similar to the proof of the lower bound in Theorem~\ref{Theorem:NumberOfAdditiveGeneral}. Consider large enough $n$ and $p\in S_n$. Then there are at least $(n/18)/[\log(n/18)+2]=n/[18(\log n-\log18+2)]$ primes $a$ in $\{1,2,\ldots,n\}$ such that $18a\leq n$. Since there are $7$ primes in $\{1,2,\ldots,18\}$, there are at least $
n/[18(\log n-\log18+2)]-7
$
primes $a$ in $\{19,20,\ldots,n\}$ such that $18a\leq n$. For each prime $a$, the subpermutation of $p$ on $\{a,2a,\ldots,18a\}$ has a monotone $2$-additive subsequence of length three. Since each of these monotone $2$-additive subsequence has a unique prime factor greater than $18$, these subsequences are all distinct. Adding a monotone $2$-additive subsequence in the supermutation of $p$ on $\{1,2,\ldots,18\}$, $p$ has \textit{at least} $
n/[18(\log n-\log18+2)]-6
$ monotone $2$-additive subsequences of length three. This completes the proof of the lower bound.

Now we prove the upper bound. Note that if we follow the same argument as in the proof of the upper bound in Theorem~\ref{Theorem:NumberOfAdditiveGeneral}, we will get a quadratic upper bound with a larger coefficient for the quadratic term. To get the upper bound stated in the current theorem, instead we will count the number of monotone $2$-additive subsequences for a specific family of permutations. Let $n$ be sufficiently large. For $2\leq a\leq\lceil n/2\rceil$, we consider the permutation
    \[
    \left(\left\lceil\frac{n}{a}\right\rceil,\left\lceil\frac{n}{a}\right\rceil-1,\ldots,1,n,n-1,\ldots, \left\lceil\frac{n}{a}\right\rceil+1\right)\in S_n.
    \]
    We note that the above permutation was inspired by the work of Myers \cite{Myers2002} who studied the minimum number of monotone subsequences of permutations without arithmetic properties. This permutation does not contain increasing subsequences of length three. All the decreasing subsequences come from either the \textbf{first part} $\left(\left\lceil\frac{n}{a}\right\rceil,\left\lceil\frac{n}{a}\right\rceil-1,\ldots,1\right)$ or the \textbf{second part} $\left(n,n-1,\ldots, \left\lceil\frac{n}{a}\right\rceil+1\right)$. Since both parts are decreasing, the number of monotone $2$-additive subsequences for each part is equal to the number of solutions to $x+y=z$ with $x<y$ in the corresponding domain. Hence, the number of monotone $2$-additive subsequences in the first part is \textit{at most}
    \[
    \sum_{i=1}^{\lfloor n/(2a)\rfloor}\left(\left\lceil\frac{n}{a}\right\rceil-2i\right)\leq\sum_{i=1}^{\lfloor n/(2a)\rfloor}\left(\frac{n}{a}+1-2i\right)\leq\left(\frac{n}{a}+1\right)\frac{n}{2a}-\left(\frac{n}{2a}-1\right)\frac{n}{2a}=\frac{1}{4a^2}n^2+\frac{1}{a}n;
    \]
    and the number of decreasing $2$-additive subsequences in the second part is \textit{at most}
    \[
    \begin{split}
    \sum_{i=\left\lceil n/a\right\rceil+1}^{\lfloor n/2\rfloor}(n-2i)=&n\left(\left\lfloor\frac{n}{2}\right\rfloor-\left\lceil\frac{n}{a}\right\rceil\right)-\left[\left\lfloor\frac{n}{2}\right\rfloor\left(\left\lfloor\frac{n}{2}\right\rfloor+1\right)-\left\lceil\frac{n}{a}\right\rceil\left(\left\lceil\frac{n}{a}\right\rceil+1\right)\right]\\\leq&n\left(\frac{n}{2}-\frac{n}{a}\right)-\frac{n}{2}\left(\frac{n}{2}-1\right)+\frac{n}{a}\left(\frac{n}{a}+2\right)=\frac{a^2+4-4a}{4a^2}n^2+\frac{a+2}{2a}n.
    \end{split}
    \]
    Adding them together, the total number of decreasing $2$-additive subsequences is \textit{at most}
    \[
    \frac{a^2+5-4a}{4a^2}n^2+\frac{a+4}{2a}n.
    \]
    For all large enough $n$, the above quantity reaches minimum $n^2/18+7n/6$ when $a=3$. This completes the proof of the upper bound.
\end{proof}

\section{Multiplicative and Inverse-Additive Subsequences}\label{Section:Prod}
In this section, we show that Theorems~\ref{Theorem:Main} and \ref{Theorem:Mon} can be naturally extended to subsequence products and inverse sums of subsequences. We call a (monotone) sequence $(a_1,a_2,\ldots,a_k)$ of positive integers (monotone) $\ell$-\textbf{multiplicative} if $a_1a_2\cdots a_k=a_1^\ell$ or $a_k^\ell$; and we call a (monotone) sequence $(a_1,a_2,\ldots,a_k)$ of positive integers (monotone) $\ell$-\textbf{inverse-additive} if $1/a_1+1/a_2+\cdots+1/a_k=\ell/a_1$ or $\ell/a_k$.

Our first result says that the existence of additive subsequences guarantees the existence of multiplicative subsequences in longer permutations. To show this, we use a well-known combinatorial argument in arithmetic Ramsey theory which essentially says that if an additive property is preserved under colorings of $\{1,2,\ldots,n\}$ then a similar multiplicative property is preserved under colorings of $\{2^1,2^2,\ldots,2^n\}$ (see, for example, \cite{ACOS2025}).

\begin{theorem}\label{Theorem:MultiExistence}
Let $k\geq3$, $\ell\geq2$, and $n$ be positive integers. If every $p\in S_n$ has an $\ell$-additive subsequence of length $k$, then every $q\in S_{2^n}$ has an $\ell$-multiplicative subsequence of length $k$. Similarly, if every $p\in S_n$ has a monotone $\ell$-additive subsequence of length $k$, then every $q\in S_{2^n}$ has a monotone $\ell$-multiplicative subsequence of length $k$.
\end{theorem}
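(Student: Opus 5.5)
Given $q\in S_{2^n}$, restrict attention to the powers of two $\{2^1,2^2,\ldots,2^n\}$, all of which lie in $\{1,2,\ldots,2^n\}$ (we may also include $2^0=1$, but this is unnecessary). Let $q'$ be the subpermutation of $q$ on $\{2^1,\ldots,2^n\}$. Replacing each term $2^i$ of $q'$ by its exponent $i$ yields a sequence $p=(p_1,\ldots,p_n)$ which is a permutation of $\{1,2,\ldots,n\}$, so $p\in S_n$. The map $2^i\mapsto i$ is a bijection between $\{2^1,\ldots,2^n\}$ and $\{1,\ldots,n\}$ that preserves positions in the sequence. It is also strictly increasing, so it preserves order between values.

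By hypothesis $p$ has an $\ell$-additive subsequence $(x_1,\ldots,x_k)$, say with $x_1+\cdots+x_k=\ell x_1$; the case $\ell x_k$ is symmetric. The corresponding terms $(2^{x_1},\ldots,2^{x_k})$ occur in the same order in $q'$, and hence in $q$, since $q'$ is a subpermutation of $q$. Their product is
\[
2^{x_1}\cdots 2^{x_k}=2^{x_1+\cdots+x_k}=2^{\ell x_1}=(2^{x_1})^\ell,
\]
so this subsequence of $q$ is $\ell$-multiplicative. In the monotone version, the same argument works: $i\mapsto 2^i$ is strictly increasing, so a monotone subsequence of $p$ maps to a monotone subsequence of $q$ with the same direction.

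There is no real obstacle. The only points needing care are that $2^n\le 2^n$, so all the chosen powers lie in the ground set of $q$, and that subsequences of a subpermutation are subsequences of the original permutation. This follows directly from the definition of subpermutation in Section~\ref{Section:Pre}.
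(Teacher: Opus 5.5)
Your proof is correct and is essentially the paper's own argument: restrict $q$ to $\{2^1,\ldots,2^n\}$, read off exponents to get $p\in S_n$, and map an $\ell$-additive subsequence of $p$ to an $\ell$-multiplicative one of $q$ via $2^{x_1}\cdots 2^{x_k}=2^{x_1+\cdots+x_k}$. Your remark that $i\mapsto 2^i$ is strictly increasing supplies the detail the paper leaves implicit when it says the monotone case is similar.
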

\begin{proof}
We will prove the case for $\ell$-additive subsequences and the case for monotone $\ell$-additive subsequences is similar.

    Suppose every $p\in S_n$ has an $\ell$-additive subsequence of length $k$. Let $q\in S_{2^n}$. Consider the subpermutation $q'=(q'_1,q'_2,\ldots,q'_n)$ of $q$ on $\{2^1, 2^2,\ldots,2^n\}$. Let $p=(p_1,p_2,\ldots,p_n)\in S_n$ such that $q_i' = 2^{p_i}$ for all $i\in\{1,2,\ldots,n\}$. Since $p\in S_n$, $p$ has an $\ell$-additive subsequence of length $k$, say $(x_1,x_2,\ldots,x_k)$. WLOG, we assume that $x_1+x_2+\cdots+x_k=\ell x_k$. By the construction of $p$, $(2^{x_1},2^{x_2},\ldots,2^{x_k})$ is a subsequence of $q'$ and hence a subsequence of $q$. Since
    \[
    \prod_{i=1}^k2^{x_i}=2^{\sum_{i=1}^kx_i}=2^{\ell x_k}=\left(2^{x_k}\right)^\ell,
    \]
    $(2^{x_1},2^{x_2},\ldots,2^{x_k})$ is an $\ell$-multiplicative subsequence of $q$.
\end{proof}

Theorems~\ref{Theorem:Main} and \ref{Theorem:MultiExistence} together imply the existence of $2$-multiplicative subsequences of arbitrary length.

\begin{corollary}
    For any $k\geq3$, if $n$ is large enough, then every $p\in S_n$ has a $2$-multiplicative subsequence of length $k$.
\end{corollary}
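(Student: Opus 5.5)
This corollary follows by chaining Theorem~\ref{Theorem:Existence:Nonmonotone} (the quantitative form of Theorem~\ref{Theorem:Main}) with Theorem~\ref{Theorem:MultiExistence}; no new combinatorics is needed.

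Fix $k\geq3$ and set $m=\lceil 9(k-2)(k^2-k)^2/4\rceil$. By Theorem~\ref{Theorem:Existence:Nonmonotone}, every $p\in S_m$ has a $2$-additive subsequence of length $k$. Next, apply Theorem~\ref{Theorem:MultiExistence} with $\ell=2$ and $n=m$. This shows that every $q\in S_{2^m}$ has a $2$-multiplicative subsequence of length $k$.

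It remains to pass from $2^m$ to every larger $n$. Lemma~\ref{Lemma:FromAll-nToSome-n} is stated only for additive subsequences, but its one-line argument transfers directly. Given $n\geq 2^m$ and $q\in S_n$, take the subpermutation of $q$ on $\{1,2,\ldots,2^m\}$. This is a permutation of $\{1,\ldots,2^m\}$, so it contains a $2$-multiplicative subsequence of length $k$. That subsequence is also a subsequence of $q$, and the defining identity $a_1a_2\cdots a_k=a_1^2$ or $a_k^2$ involves only the values of the terms, so it still holds in $q$.

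Alternatively, one can skip the monotonicity step. In the proof of Theorem~\ref{Theorem:MultiExistence}, use the subpermutation of $q\in S_n$ on $\{2^1,\ldots,2^m\}$ directly, which requires only $n\geq 2^m$. Either way, the conclusion holds for all $n\geq 2^{\lceil 9(k-2)(k^2-k)^2/4\rceil}$.

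There is no real obstacle here. The only point to check is that the hypothesis of Theorem~\ref{Theorem:MultiExistence} is met exactly at $n=m$, which Theorem~\ref{Theorem:Existence:Nonmonotone} guarantees, and that monotonicity in $n$ holds for multiplicative subsequences, which is the subpermutation argument above.
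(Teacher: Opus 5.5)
Your proposal is correct and follows the paper's argument, which simply chains Theorem~\ref{Theorem:Main} (whose quantitative form is Theorem~\ref{Theorem:Existence:Nonmonotone}) with Theorem~\ref{Theorem:MultiExistence}. Beyond that, you make explicit the passage from $S_{2^m}$ to every $S_n$ with $n\geq 2^m$, either via the subpermutation on $\{1,\ldots,2^m\}$ or directly on $\{2^1,\ldots,2^m\}$; the paper leaves this implicit, and you also record the threshold $2^{\lceil 9(k-2)(k^2-k)^2/4\rceil}$.
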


Similarly, Theorems~\ref{Theorem:Mon} and \ref{Theorem:MultiExistence} together imply the existence of monotone $2$-multiplicative subsequences of length three.

\begin{corollary}
    If $n$ is large enough, then every $p\in S_n$ has a monotone $2$-multiplicative subsequence of length three.
\end{corollary}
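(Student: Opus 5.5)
This corollary is an immediate combination of Theorem~\ref{Theorem:Mon} with the monotone half of Theorem~\ref{Theorem:MultiExistence}, specialized to $k=3$ and $\ell=2$. Theorem~\ref{Theorem:Mon} says that every $p\in S_{18}$ has a monotone $2$-additive subsequence of length three. Theorem~\ref{Theorem:MultiExistence} then gives that every $q\in S_{2^{18}}$ has a monotone $2$-multiplicative subsequence of length three. Finally, the monotone analogue of Lemma~\ref{Lemma:FromAll-nToSome-n}, adapted to multiplicative subsequences, extends this to every $n\geq 2^{18}$.

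The one step that deserves checking is the monotone case of Theorem~\ref{Theorem:MultiExistence}, whose proof in the paper was only said to be ``similar.'' Restrict $q$ to $\{2^1,\ldots,2^n\}$ and encode it as $p\in S_n$ via $q'_i=2^{p_i}$. A monotone $2$-additive subsequence $(x_1,x_2,x_3)$ of $p$ maps to $(2^{x_1},2^{x_2},2^{x_3})$, and this is a subsequence of $q$ in the same positional order. Since $x\mapsto 2^x$ is strictly increasing, it is monotone in the same direction. The product identity $\prod 2^{x_i}=2^{\sum x_i}=(2^{x_j})^2$, with $j\in\{1,3\}$, shows it is $2$-multiplicative.

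The extension to all larger $n$ holds for the following reason. Given $r\in S_n$ with $n\geq 2^{18}$, take the subpermutation on $\{1,\ldots,2^{18}\}$. This is a permutation of $\{1,\ldots,2^{18}\}$, so it contains a monotone $2$-multiplicative subsequence of length three. Any subsequence of a subpermutation is a subsequence of $r$ with the same values, so it remains monotone and $2$-multiplicative in $r$.

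There is no real obstacle here. The only care needed is confirming that monotonicity is preserved, which is the order-preservation of $x\mapsto 2^x$, and that the subpermutation reduction is valid for the multiplicative property. Neither depends on anything beyond the values of the terms.
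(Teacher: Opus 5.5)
Your proposal is correct and matches the paper's argument, which combines Theorem~\ref{Theorem:Mon} ($n=18$) with the monotone case of Theorem~\ref{Theorem:MultiExistence} at $k=3$, $\ell=2$. Your explicit checks that $x\mapsto 2^x$ preserves monotonicity, and that the result extends to all $n\geq 2^{18}$ via subpermutations, supply details the paper leaves implicit.
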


Next, we prove that the existence of additive subsequences implies the existence of inverse-additive subsequences. Similar to Theorem~\ref{Theorem:MultiExistence}, we use another well-known argument in arithmetic Ramsey theory which essentially says that if an additive property is preserved under colorings of $\{1,2,\ldots,n\}$ then an ``inverse-additive" property is preserved under colorings of $\{1,2,\ldots,L_n\}$ where $L_n$ is the least common multiple of $\{1,2,\ldots,n\}$ (see, for example, \cite{BrownRodl1991,Gaiser2024,Lefmann1991}).
\begin{theorem}\label{Theorem:InverseExistence}
Let $k\geq3$, $\ell\geq2$, and $n$ be positive integers and let $L_n$ be the least common multiple of $\{1,2,\ldots,n\}$. If every $p\in S_n$ has an $\ell$-additive subsequence of length $k$, then every $q\in S_{L_n}$ has an $\ell$-inverse-additive subsequence of length $k$. Similarly, if every $p\in S_n$ has a monotone $\ell$-additive subsequence of length $k$, then every $q\in S_{L_n}$ has a monotone $\ell$-inverse-additive subsequence of length $k$.
\end{theorem}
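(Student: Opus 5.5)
We will mirror the proof of Theorem~\ref{Theorem:MultiExistence}, replacing the powers of $2$ by the divisors $L_n/1, L_n/2,\ldots,L_n/n$. Suppose every $p\in S_n$ has an $\ell$-additive subsequence of length $k$, and let $q\in S_{L_n}$. Since each $i\in\{1,\ldots,n\}$ divides $L_n$, the numbers $L_n/i$ are positive integers. They are distinct and lie in $\{1,2,\ldots,L_n\}$. Let $q'=(q'_1,\ldots,q'_n)$ be the subpermutation of $q$ on $\{L_n/1,L_n/2,\ldots,L_n/n\}$. Define $p=(p_1,\ldots,p_n)$ by $q'_i=L_n/p_i$. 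Because $i\mapsto L_n/i$ is a bijection onto this set, $p\in S_n$.

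By hypothesis, $p$ has an $\ell$-additive subsequence $(x_1,\ldots,x_k)$, and by symmetry we may assume $x_1+\cdots+x_k=\ell x_k$. Then $(L_n/x_1,\ldots,L_n/x_k)$ is a subsequence of $q'$, hence of $q$, since positions are preserved. Write $y_j=L_n/x_j$. Then
\[
\sum_{j=1}^k\frac{1}{y_j}=\frac{1}{L_n}\sum_{j=1}^k x_j=\frac{\ell x_k}{L_n}=\frac{\ell}{y_k},
\]
so $(y_1,\ldots,y_k)$ is $\ell$-inverse-additive. The case $x_1+\cdots+x_k=\ell x_1$ is identical and gives $\ell/y_1$.

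For the monotone version, the map $x\mapsto L_n/x$ is strictly decreasing. It therefore sends an increasing subsequence of $p$ to a decreasing subsequence of $q$, and a decreasing one to an increasing one, so monotonicity is preserved. The rest of the argument is unchanged.

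The only point needing care is that the correspondence really is a permutation and respects order of positions. This is immediate once we note that the values $L_n/i$ are distinct integers not exceeding $L_n$. No step is a serious obstacle; the whole argument is a direct transfer through the order-reversing bijection.
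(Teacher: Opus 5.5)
Your proposal is correct and follows the paper's proof essentially step for step. You restrict $q$ to $\{L_n/1,\ldots,L_n/n\}$, pull back to $p\in S_n$ via $q'_i=L_n/p_i$, and transfer an $\ell$-additive subsequence through $x\mapsto L_n/x$. Your remark that this map is order-reversing, and hence preserves monotonicity, spells out the monotone case that the paper only calls similar.
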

\begin{proof}
We prove the case for inverse-additive subsequences. The case for monotone inverse-additive subsequences can be proved similarly.

Suppose every $p\in S_n$ has an $\ell$-additive subsequence of length $k$. Let $q\in S_{L_n}$. Let $q'$ be the subpermutation of $q$ on $\{L_n/1,L_n/2,\ldots,L_n/n\}$. Let $p\in S_n$ be such that for all $a,b\in\{1,2,\ldots,n\}$, $a\in\mathcal{L}_{p}(b)$ if and only if $L_n/a\in\mathcal{L}_{q'}(L_n/b)$. Let $(x_1,x_2,\ldots,x_k)$ be an $\ell$-additive subsequence of $p$. WLOG, we assume that $x_1+x_2+\cdots+x_k=\ell x_k$. Then $(L_n/x_1,L_n/x_2,\ldots,L_n/x_k)$ is a subsequence of $q'$ and
\[
\frac{1}{L_n/x_1}+\frac{1}{L_n/x_2}+\cdots+\frac{1}{L_n/x_k}=\frac{\ell}{L_n/x_k}.
\]
Hence, $(L_n/x_1,L_n/x_2,\ldots,L_n/x_k)$ is an $\ell$-inverse-additive subsequence of $q$.
\end{proof}

Theorems~\ref{Theorem:Main} and \ref{Theorem:InverseExistence} together imply the existence of $2$-inverse-additive subsequences of any length.
\begin{corollary}
    For all positive integers $k\geq3$, if $n$ is large enough, then every $p\in S_n$ has a $2$-inverse-additive subsequence of length $k$.
\end{corollary}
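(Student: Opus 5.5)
The corollary follows by chaining Theorem~\ref{Theorem:Main} (in its quantitative form, Theorem~\ref{Theorem:Existence:Nonmonotone}) with Theorem~\ref{Theorem:InverseExistence}, and then passing to all larger $n$. Fix $k\geq3$ and set $m=\lceil 9(k-2)(k^2-k)^2/4\rceil$. By Theorem~\ref{Theorem:Existence:Nonmonotone}, every permutation in $S_m$ has a $2$-additive subsequence of length $k$. Applying Theorem~\ref{Theorem:InverseExistence} with $\ell=2$ and $n=m$, every $q\in S_{L_m}$ has a $2$-inverse-additive subsequence of length $k$, where $L_m=\operatorname{lcm}(1,2,\ldots,m)$.

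It remains to extend this from $S_{L_m}$ to every $S_N$ with $N\geq L_m$. Lemma~\ref{Lemma:FromAll-nToSome-n} is stated only for additive subsequences, so I will repeat its one-line argument for the inverse-additive property. Given $\sigma\in S_N$ with $N\geq L_m$, take the subpermutation of $\sigma$ on $\{1,2,\ldots,L_m\}$. It is an ordering of exactly the values $1,\ldots,L_m$, so it is a permutation in $S_{L_m}$. It therefore contains a $2$-inverse-additive subsequence of length $k$, which is also a subsequence of $\sigma$. The defining identity $1/x_1+\cdots+1/x_k=2/x_1$ or $2/x_k$ refers only to the values of the terms and to which term comes first or last. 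Passing to a subpermutation preserves both, so the property transfers.

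Hence every $\sigma\in S_N$ with $N\geq L_m$ has a $2$-inverse-additive subsequence of length $k$, which is the claim; an explicit, very large, threshold $L_m$ comes out of the argument. There is no real obstacle here. The only point needing care is that Lemma~\ref{Lemma:FromAll-nToSome-n} must be re-proved for the inverse-additive setting rather than cited directly.
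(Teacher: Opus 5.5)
Your proposal is correct and follows the paper's route: the paper obtains this corollary by combining Theorem~\ref{Theorem:Main} with Theorem~\ref{Theorem:InverseExistence}, which is what you do through its quantitative form, Theorem~\ref{Theorem:Existence:Nonmonotone}. Your subpermutation argument, which extends the conclusion from $S_{L_m}$ to every $S_N$ with $N\geq L_m$, spells out a step the paper leaves implicit.
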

Similarly, Theorems~\ref{Theorem:Mon} and \ref{Theorem:InverseExistence} together imply the existence of monotone $2$-inverse-additive subsequences of length three.
\begin{corollary}
    If $n$ is large enough, then every $p\in S_n$ has a monotone $2$-inverse-additive subsequence of length three.
\end{corollary}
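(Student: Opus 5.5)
This corollary should follow by chaining Theorem~\ref{Theorem:Mon} with the monotone part of Theorem~\ref{Theorem:InverseExistence}, then extending to all larger $n$.

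First, apply Theorem~\ref{Theorem:Mon}: every permutation in $S_{18}$ has a monotone $2$-additive subsequence of length three. Second, apply the monotone statement of Theorem~\ref{Theorem:InverseExistence} with $n=18$, $k=3$, $\ell=2$. This gives that every $q\in S_{L_{18}}$ has a monotone $2$-inverse-additive subsequence of length three.

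The only point to check is the monotone version of that theorem, which the paper proves only for the non-monotone case. Let $q'$ be the subpermutation of $q$ on $\{L_{18}/1,\ldots,L_{18}/18\}$. The map $x\mapsto L_{18}/x$ is strictly decreasing, and it preserves positions: $p$ is built so that $a$ lies left of $b$ in $p$ exactly when $L_{18}/a$ lies left of $L_{18}/b$ in $q'$. Hence a monotone subsequence $(x_1,x_2,x_3)$ of $p$ maps to a subsequence $(L_{18}/x_1,L_{18}/x_2,L_{18}/x_3)$ of $q'$, and hence of $q$. This image is again monotone, with the direction reversed. The identity
\[
\frac{1}{L_{18}/x_1}+\frac{1}{L_{18}/x_2}+\frac{1}{L_{18}/x_3}=\frac{x_1+x_2+x_3}{L_{18}}=\frac{2x_j}{L_{18}}=\frac{2}{L_{18}/x_j}, \qquad j\in\{1,3\},
\]
shows that the $2$-additive condition becomes the $2$-inverse-additive condition at the same endpoint.

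Finally, pass from $S_{L_{18}}$ to all $n\ge L_{18}$. This is the analogue of Lemma~\ref{Lemma:FromAll-nToSome-n} for inverse-additive subsequences. Given $\sigma\in S_n$, take its subpermutation on $\{1,\ldots,L_{18}\}$ and relabel it as an element of $S_{L_{18}}$. No relabeling is actually needed, since those values are already exactly $1,\ldots,L_{18}$. The defining equation and monotonicity both depend only on the values and their relative order, so the subsequence found there works in $\sigma$. Therefore $n\ge L_{18}$ suffices.

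I expect no real obstacle. The one thing to get right is that the reciprocal map reverses the order of values but not positions, so monotonicity survives.
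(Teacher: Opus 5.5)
Your proposal is correct and matches the paper's argument: the paper obtains this corollary by chaining Theorem~\ref{Theorem:Mon} (at $n=18$) with the monotone case of Theorem~\ref{Theorem:InverseExistence}. Your explicit check that $x\mapsto L_{18}/x$ keeps positions but reverses the order of values, so that both monotonicity and the endpoint condition survive, fills in what the paper leaves as ``similar.'' So does your passage to all $n\ge L_{18}$ via the subpermutation on $\{1,\ldots,L_{18}\}$.
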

\section{Concluding Remarks}\label{Section:Conclu}
In this paper, we proved the existence of several subsequences with certain arithmetic properties in large enough permutations. Any answer to Question~\ref{Problem:l-Additive} and/or Question~\ref{Problem:Mon} would provide more insights on the behavior of sums of subsequences in permutations. Another possible direction is to study weighted subsequence sums; that is, $c_1x_1+c_2x_2+\cdots+c_kx_k$ where $(x_1,x_2,\ldots,x_k)$ is a sequence of length $k$ and $c_1,c_2,\ldots,c_k$ are constants.

In Theorems~\ref{Theorem:Existence:Nonmonotone} and \ref{Theorem:NonmonotoneLower}, we provided polynomial bounds for the smallest $n$ such that every $p\in S_n$ has a $2$-additive subsequence of length $k$. When $k=3$, we showed that the smallest $n$ is $5$ in Proposition~\ref{Prop:Extremal2Add}. This fact and Theorem~\ref{Theorem:MultiExistence} together imply that the smallest $n$ such that every $p\in S_n$ has a $2$-multiplicative subsequence of length three is at most $2^5=32$. In fact, 32 is the smallest $n$ such that every $p\in S_n$ has a $2$-multiplicative subsequence of length three because 
\[
(1,5,10,20,7,14,28,2,26,13,22,11,21,30,16,15,8,4,24,12,6,18,3,27,9,17,19,23,25,29,31)
\]
is a permutation of $\{1,2,\ldots,31\}$ which does not have $2$-multiplicative subsequences of length three. We have a similar result for $2$-inverse-additive subsequences of length three. Proposition~\ref{Prop:Extremal2Add} and Theorem~\ref{Theorem:InverseExistence} together imply that the smallest $n$ such that every $p\in S_n$ has a $2$-inverse-additive subsequence of length three is at most $60$ ($60$ is the least common multiple of $\{1,2,3,4,5\}$); and, on the other hand, if $p\in S_{59}$ such that the subpermutation of $p$ on \[\{2,3,4,5,6,7,8,9,10,12,14,15,16,18,20,21,24,27,28,30,35,36,40,42,45,48,54,56\}\]
    is
    \[
    (27,21,14,56,45,24,18,42,15,6,10,2,7,8,40,5,4,20,3,16,12,30,9,36,48,28,35,54),
    \]
then $p$ does not have $2$-inverse-additive subsequences of length three. It would be interesting to study whether, as $k$ approaches infinity, these general thresholds of $n$ for multiplicative subsequences and inverse-additive subsequences are significantly smaller than the upper bounds suggested by Theorems~\ref{Theorem:MultiExistence} and \ref{Theorem:InverseExistence}. The work of Brown and R\"{o}dl \cite{BrownRodl1991} and the first author \cite{Gaiser2024} provided affirmative answers to the counterpart of inverse-additive subsequences in arithmetic Ramsey theory; while it is known that the counterpart of multiplicative subsequences in arithmetic Ramsey theory behaves differently \cite[Section 6]{ACOS2025}.

\section*{Acknowledgments}
Some of the results in this article were previously included in the doctoral dissertation of the first author \cite{Gaiser2024b}. The authors used SageMath for computational experiments and counterexample searches, and some of the SageMath code was developed with the help of Google AI tools.

\end{document}